\title[Hydrodynamic limit of the Schr\"odinger-Chern-Simons system]{Hydrodynamic limits of the nonlinear Schr\"odinger equation
with the Chern-Simons gauge fields}
\author[Kim]{Jeongho Kim}
\address[Jeongho Kim]{\newline Department of Mathematics, Research Institute for Natural Sciences, \newline Hanyang University, Seoul 04763, Republic of Korea}
\email{jeonghokim206@gmail.com}
\author[Moon]{Bora Moon}
\address[Bora Moon]{\newline Department of Mathematics, Research Institute for Natural Sciences, \newline Hanyang University, Seoul 04763, Republic of Korea}
\email{boramoon@hanyang.ac.kr}
\email{boramoon@outlook.kr}
\begin{document}
	\newtheorem{theorem}{Theorem}[section]
	\newtheorem{lemma}{Lemma}[section]
	\newtheorem{corollary}{Corollary}[section]
	\newtheorem{proposition}{Proposition}[section]
	\newtheorem{remark}{Remark}[section]
	\newtheorem{definition}{Definition}[section]
	
	\renewcommand{\theequation}{\thesection.\arabic{equation}}
	\renewcommand{\thetheorem}{\thesection.\arabic{theorem}}
	\renewcommand{\thelemma}{\thesection.\arabic{lemma}}
	\newcommand{\bbr}{\mathbb R}
	\newcommand{\bbz}{\mathbb Z}
	\newcommand{\bbn}{\mathbb N}
	\newcommand{\bbs}{\mathbb S}
	\newcommand{\bbp}{\mathbb P}
	\newcommand{\bbt}{\mathbb T}
	\newcommand{\ddiv}{\textrm{div}}
	\newcommand{\bn}{\bf n}
	\newcommand{\rr}[1]{\rho_{{#1}}}
	\newcommand{\thh}{\theta}
	\def\charf {\mbox{{\text 1}\kern-.24em {\text l}}}
	\renewcommand{\arraystretch}{1.5}
	
	\newcommand{\T}{\mathbb{T}}
	\newcommand{\N}{\mathbb{N}}
	\newcommand{\R}{\mathbb{R}}
	\newcommand{\lt}{\left}
	\newcommand{\rt}{\right}
	\newcommand{\bq}{\begin{equation}}
	\newcommand{\eq}{\end{equation}}
	\newcommand{\e}{\varepsilon}
	\newcommand{\mc}{\mathcal{C}}
	\newcommand{\pa}{\partial}
	\newcommand{\ph}{\hat{p}}
	\renewcommand{\d}{\textup{d}}
	\renewcommand{\i}{\textup{i}}
	\renewcommand{\Re}{\textup{Re}}
	\renewcommand{\Im}{\textup{Im}}
	
	\subjclass[2020]{35Q55; 35B40} 
	
	\keywords{Schr\"odinger-Chern-Simons equations; Hydrodynamic limit; Modulated energy; Relative entropy}
	
	\thanks{The work of J. Kim and B. Moon was supported by the Basic Science Research Program through the National Research Foundation of Korea(NRF) funded by the Ministry of Science and ICT (NRF-2020R1A4A3079066) and the work of B. Moon was supported by Basic Science Research Program through the National Research Foundation of Korea(NRF) funded by the Ministry of Education(2019R1I1A1A01059585)}

	\begin{abstract} 
		We present two types of the hydrodynamic limit of the nonlinear Schr\"odinger-Chern-Simons (SCS) system. We consider two different scalings of the SCS system and show that each SCS system asymptotically converges towards the compressible and incompressible Euler system, coupled with the Chern-Simons equations and Poisson equation respectively, as the scaled Planck constant converges to 0. Our method is based on the modulated energy estimate. In the case of compressible limit, we observe that the classical theory of relative entropy method can be applied to show the hydrodynamic limit, with the additional quantum correction term. On the other hand, for the incompressible limit, we directly estimate the modulated energy to derive the desired asymptotic convergence.
	\end{abstract}
	
	\maketitle
	
	
	\section{Introduction}\label{sec:1}
	\setcounter{equation}{0}
	
	In this paper, we consider the hydrodynamic limit of the Schr\"odinger-Chern-Simons system with the general potential:
	\begin{align}
	\begin{aligned}\label{Chern-Simons-Schrodinger}
	&\i \hbar D_0\psi +\frac{\hbar^2}{2m}(D_1D_1\psi+D_2D_2\psi) -V'(|\psi|^2)\psi=0,\quad (t,x)\in \bbr_+\times \Omega,\\
	&\partial_0 A_1 -\partial_1A_0 = -\hbar\Im(\overline{\psi}D_2\psi),\quad \partial_0 A_2-\partial_2A_0 = \hbar\Im(\overline{\psi}D_1\psi),\\
		&\partial_1 A_2-\partial_2A_1 = -m|\psi|^2,
	\end{aligned}	
	\end{align}
	subject to initial data 
	\[(\psi,A_\mu)|_{t=0} = (\psi_{\textup{in}},A_{\mu,in}),\]
	where $\i := \sqrt{-1}$, $\pa_0 := \frac{\pa}{\pa t}$, $\pa_i := \frac{\pa}{\pa x_i}$, $\psi:\bbr_{\geq 0}\times \Omega\to \mathbb{C}$ is a complex scalar field, $m$ is the mass of the particle, $A_\mu:\bbr_{\geq 0}\times\Omega\to\bbr$ is the gauge field and $V$ is the potential energy density of the fields. Here, the spatial domain is either the whole domain $\Omega=\bbr^2$ or the periodic domain $\Omega=\bbt^2$. Also, $D_\mu = \partial_\mu +\frac{\i}{\hbar}A_\mu$ is the covariant derivative for $\mu = 0,1,2$. The SCS system \eqref{Chern-Simons-Schrodinger} proposed in \cite{EHI91,JP90-1,JP90} consists of the Schrödinger equation augmented by the gauge field $A_{\mu}$ to study vortex solitons in a nonrelativistic (2+1)-dimensional Chern-Simons gauge theory. It is well-known that the SCS system \eqref{Chern-Simons-Schrodinger} is invariant under the following gauge transformation:
	\[\psi\to \psi e^{\i\chi},\quad A_\mu \to A_\mu -\hbar\pa_\mu \chi.\]
	Therefore, to determine the gauge field, one need to give an extra condition on the gauge field. In many cases, the following Coulomb gauge condition is assumed:
	\[\nabla\cdot A = \pa_1 A_1+\pa_2A_2 = 0,\quad A:=(A_1,A_2).\]
	
	Under the Coulomb gauge condition, the Cauchy problem for the SCS system \eqref{Chern-Simons-Schrodinger} is equivalent to the following system: 
	\begin{align}
		\begin{aligned}\label{CSS-coulomb-2}
			&\i\hbar\partial_t \psi -A_0\psi +\frac{\hbar^2}{2m}\left(\Delta \psi +\frac{2\i}{\hbar}A\cdot\nabla \psi - \frac{1}{\hbar^2}|A|^2\psi\right)-V'(|\psi|^2)\psi=0,\\
			&\Delta A_0 = \hbar\Im(Q_{12}(\overline{\psi},\psi))+\partial_1(A_2|\psi|^2)-\partial_2(A_1|\psi|^2),\\
			&\Delta A_1= m\partial_2 |\psi|^2,\quad \Delta A_2=-m\partial_1|\psi|^2,
		\end{aligned}
	\end{align}
	where $Q_{12}(\overline{\psi},\psi):=\pa_1\overline{\psi}\pa_2\psi -\pa_2\overline{\psi}\pa_1\psi$, when the initial data $(\psi_{\textup{in}}, A_{\textup{in}})$ satisfy the following compatibility conditions
	\begin{equation}\label{compatibility}
		\pa_1 A_{2,\textup{in}}-\pa_2 A_{1,\textup{in}} = -m|\psi_{\textup{in}}|^2,\quad \nabla\cdot A_{\textup{in}}=0.
	\end{equation}

	\vspace{0.3cm}
	\noindent We choose $m=1$ and $\hbar = \e$ in \eqref{CSS-coulomb-2} to obtain the following scaled SCS equation:
	\begin{align}
	\begin{aligned}\label{CSS-e}
	&\i\e\partial_t \psi^\e -A^\e_0\psi^\e +\frac{\e^2}{2}\Delta \psi^\e +\i\e A^\e\cdot\nabla \psi^\e - \frac{1}{2}|A^\e|^2\psi^\e-V'(|\psi^\e|^2)\psi^\e=0,\\
	&\Delta A^\e_0 = \e\Im(Q_{12}(\overline{\psi^\e},\psi^\e))+\partial_1(A^\e_2|\psi^\e|^2)-\partial_2(A^\e_1|\psi^\e|^2),\\
	&\Delta A^{\e}_1= \partial_2 |\psi^{\e}|^2,\quad \Delta A^\e_2=-\partial_1|\psi^{\e}|^2,
	\end{aligned}
	\end{align}
	subject to the initial data $(\psi^\e_{\textup{in}},A^\e_{\textup{in}})$ satisfying \eqref{compatibility}.  We remind that, the compatibility condition \eqref{compatibility} propagates along the system $\eqref{Chern-Simons-Schrodinger}_{1,2}$:	  
	\begin{equation}\label{additional}
		\nabla\times A^\e = \pa_1 A^\e_2- \pa_2 A^\e_1 = -|\psi^\e|^2,\quad \nabla\cdot A^\e=0,\quad t\ge0,
	\end{equation}
and the additional relations correspond to $\eqref{Chern-Simons-Schrodinger}_{2}$ also hold:
\begin{align}\label{additional_2}
	\begin{aligned}	
		\pa_t A^\e_1 -\pa_1 A^\e_0 = -\e\Im(\overline{\psi^\e}D^\e_2\psi^\e),\quad \pa_t A^\e_2-\pa_2A^\e_0 = \e\Im(\overline{\psi^\e}D^\e_1\psi^\e).
	\end{aligned}
\end{align}

\vspace{0.3cm}

	\noindent The main concern of the paper is the hydrodynamic limit of \eqref{CSS-e} in the semiclassical regime, i.e., when $\hbar=\e\to0$. We consider two different scaling regimes, each of which leads to the compressible and incompressible Euler-type equations as an asymptotic system. We first focus on the hydrodynamic limit towards the compressible Euler equations. In this case, we consider the special family of the potential $V(\rho):=\frac{1}{\gamma}\rho^\gamma$ with $\gamma\ge1$. This choice of the potential covers the standard quadratic potential $V(\rho)=\frac{1}{2}\rho^2$. To identify the asymptotic hydrodynamic system, we consider the following Madelung transformation \cite{M27}:
	\[\psi^\e(t,x)=\sqrt{\rho^\e(t,x)}\exp\left(\frac{\i}{\e} S^\e(t,x)\right).\]
	Then, the hydrodynamic quantities $\rho^\e = |\psi^\e|^2$ and $u^\e:=\nabla S^\e +A^\e = \frac{\i \e}{2|\psi^\e|^2}(\psi^\e\nabla\overline{\psi^\e}-\overline{\psi^\e}\nabla\psi^\e)+A^\e$ satisfy the following compressible Euler-type equations coupled with the Chern-Simons equations:
	\begin{align}
	\begin{aligned}\label{CSS-hydro-e}
	&\partial_t \rho^\e +\nabla\cdot(\rho^\e u^\e) = 0,\\
	&\partial_t (\rho^\e u^\e) +\nabla\cdot(\rho^\e u^\e\otimes u^\e) +\nabla p(\rho^\e) =\rho^\e\frac{\e^2}{2}\nabla\left(\frac{\Delta \sqrt{\rho^\e}}{\sqrt{\rho^\e}}\right),\\
	&\Delta A^\e_0 = \nabla\times(\rho^\e u^\e),\quad \Delta A^\e = -(\nabla \rho^\e)^\perp,
	\end{aligned}
	\end{align}
 subject to initial data $(\rho^{\e}_{\textup{in}}, A^{\e}_{\textup{in}})$ satisfying the compatibility condition
 \begin{align*}
 	\nabla\times A^{\e}_{\textup{in}}=-\rho^\e_{\textup{in}},\quad \nabla\cdot A^{\e}_{\textup{in}}=0.
 \end{align*}
	Here, $p(\rho):=\rho V'(\rho)-V(\rho)=\frac{\gamma-1}{\gamma}\rho^\gamma$ is a pressure and $u^\perp:=(-u_2,u_1)$. The density-dependent source term in the momentum equation \eqref{CSS-hydro-e}$_2$ is so-called the Bohm potential, which is exactly the same as $\frac{\e^2}{4}\nabla\cdot(\rho^\e\nabla^2\log \rho^\e)$
	and this term is also understood as a quantum pressure tensor. We refer Section \ref{sec:2.2.1} for the detailed procedure to derive \eqref{CSS-hydro-e}. Heuristically, the hydrodynamic equations \eqref{CSS-hydro-e} converges to the following compressible Euler-Chern-Simons (ECS) system as $\e\to0$:
	\begin{align}
	\begin{aligned}\label{CSE}
	&\partial_t \rho +\nabla\cdot(\rho u) = 0,\\
	&\partial_t (\rho u) +\nabla\cdot(\rho u\otimes u) +\nabla p(\rho) =0,\\
	&\Delta A_0 = \nabla\times( \rho u),\quad \Delta A=-(\nabla \rho)^{\perp},
	\end{aligned}
	\end{align}
	subject to initial data $(\rho_{\textup{in}},A_{\textup{in}})$ satisfying the compatibility condition
	\begin{equation}\label{compatibility-CE}
	\nabla\times A_{\textup{in}} = -\rho_{\textup{in}},\quad \nabla\cdot A_{\textup{in}} = 0.
	\end{equation}
	The first goal of the paper is to obtain the rigorous convergence from \eqref{CSS-hydro-e} to \eqref{CSE}.\\
	
	On the other hand, for the case of incompressible limit, we further scale the equation \eqref{CSS-e} by taking $t\to \e^{\alpha}t$, $A_0^\e\to \e^\alpha A_0^\e$ for $0<\alpha<1$ and $A^\e\to\e^{-\alpha}A^\e$ and consider another special family of potential $V(\rho) = \frac{1}{\gamma}(\rho^{\frac{\gamma}{2}}-1)^2$ with $\gamma\ge2$. Then, the rescaled SCS system becomes:
	\begin{align}
		\begin{aligned}\label{CSS-e-incomp}
			&\i\e^{\alpha+1} \pa_t\psi^\e - \e^\alpha A_0^\e\psi^\e+\frac{\e^2}{2}\Delta\psi^\e+\i\e^{1-\alpha} A^\e\cdot \nabla \psi^\e -\frac{\e^{-2\alpha}}{2}|A^\e|^2\psi^\e-(|\psi^\e|^\gamma-1)|\psi^\e|^{\gamma-2}\psi^\e=0,\\
			&\Delta A_0^\e = \e^{1-\alpha}\Im(Q_{12}(\overline{\psi^\e},\psi^\e))+\e^{-2\alpha}\pa_1(A_2^\e|\psi^\e|^2)-\e^{-2\alpha}\pa_2(A_1^\e|\psi^\e|^2),\\
			&\Delta A^{\e}_1=\e^{\alpha}\pa_2 |\psi^\e|^2,\quad \Delta A^{\e}_2=-\e^{\alpha}\pa_1 |\psi^\e|^2,
		\end{aligned}
	\end{align}
	 Again, considering the same Madelung transformation and introduce the hydrodynamic quantities $\rho^\e:=|\psi^\e|^2$ and $u^\e := \e^{-\alpha}\nabla S^\e +\e^{-2\alpha} A^\e=\frac{\i \e^{1-\alpha}}{2|\psi^\e|^2}(\psi^\e\nabla\overline{\psi^\e}-\overline{\psi^\e}\nabla\psi^\e)+\e^{-2\alpha}A^\e$, one can derive the similar Euler-type equations coupled with Chern-Simons equations:
	\begin{align}
		\begin{aligned}\label{CSS-hydro-e-2}
		&\partial_t \rho^\e +\nabla\cdot(\rho^\e u^\e) = 0,\\
		&\partial_t (\rho^\e u^\e) +\nabla\cdot(\rho^\e u^\e\otimes u^\e) +\e^{-2\alpha}\nabla p(\rho^\e) =\rho^\e\frac{\e^{2-2\alpha}}{2}\nabla\left(\frac{\Delta \sqrt{\rho^\e}}{\sqrt{\rho^\e}}\right),\\
		&\Delta A^\e_0 = \nabla\times(\rho^\e u^\e),\quad \Delta A^\e=-\e^\alpha(\nabla \rho^\e)^{\perp},
		\end{aligned}
	\end{align}
	where the pressure is now $p(\rho) = \frac{\gamma-1}{\gamma}\rho^\gamma-\frac{\gamma-2}{\gamma}\rho^{\frac{\gamma}{2}}$. Heuristically, it follows from \eqref{CSS-hydro-e-2}$_2$ that $\nabla p(\rho^\e)= \nabla\left(\frac{\gamma-1}{\gamma}(\rho^\e)^\gamma-\frac{\gamma-2}{\gamma}(\rho^\e)^{\frac{\gamma}{2}}\right)$ is dominating term, and therefore, as $\e \to 0$, we may guess $\rho^\e$ converges to constant. Consequently, an appropriate limit system for \eqref{CSS-hydro-e-2} would be the following incompressible Euler equations coupled with the Poisson equation:
	\begin{align}
		\begin{aligned}\label{CSE-incomp}
			&\partial_t u +(u\cdot \nabla)u +\nabla \pi =0,\quad \nabla\cdot u =0,\\
			&\Delta A_0 = \nabla\times u.
		\end{aligned}
	\end{align}
	The second goal of the paper is the rigorous derivation of \eqref{CSE-incomp} from \eqref{CSS-hydro-e-2}. \\

	The rest of the paper is organized as follows. In Section \ref{sec:2}, we present the preliminaries for the Schr\"odinger-Chern-Simons equations and its hydrodynamic formulation. We also state the main theorem of the paper in this section. Section \ref{sec:3} provides the relation between two concepts, the relative entropy used in the kinetic theory and the modulated energy used in the quantum theory. Indeed, we show that they are almost equivalent concepts, except for the quantum correction term. Then, we provide the complete proofs of Theorem \ref{thm:main-1} in Section 4, using the classical relative entropy method and the relation between modulated energy and the relative entropy. In Section 5, we provide the proof of Theorem \ref{thm:main-2} by directly estimating modulated energy. Finally, Section \ref{sec:6} is devoted to a summary and future perspectives. 

	\section{Preliminaries}\label{sec:2}
	\setcounter{equation}{0}
	In this section, we present the basic conservation laws for SCS equations \eqref{CSS-e} and \eqref{CSS-e-incomp}, and identify the asymptotic hydrodynamic equations. We also provide the main theorem of the paper at the end of the section.
	
	\subsection{Conservation laws}
	One of the most important properties of the SCS system \eqref{Chern-Simons-Schrodinger} is the conservation of charge and energy. 
	One can obtain the conservation laws for  SCS equations \eqref{CSS-e} and \eqref{CSS-e-incomp}, respectively, using the following lemma.
	
	\begin{lemma} \label{L2.1}
		Let $(\psi,A_0, A)$ be a smooth solution to the SCS system \eqref{Chern-Simons-Schrodinger} subject to the initial data $(\psi_{\textup{in}}, A_{\textup{in}})$. Then, the total charge and the total energy are conserved:
		\begin{align*}
			\begin{aligned}
				&\frac{\d}{\d t}\int_{\Omega} |\psi(t,x)|^2\,\d x=0,\quad
				 \frac{\d}{\d t}\int_{\Omega} \frac{\hbar^2
				 }{2}\sum_{j=1}^2 |D_j\psi(t,x)|^2 +V(|\psi(t,x)|^2)\,\d x=0.
			\end{aligned}
		\end{align*}	
	\end{lemma}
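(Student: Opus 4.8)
The plan is to derive both identities by testing the Schr\"odinger equation in \eqref{Chern-Simons-Schrodinger} against suitable multiples of $\psi$ and integrating by parts in a gauge-covariant way. Three elementary facts will be used throughout. First, for every $\mu\in\{0,1,2\}$ one has $\Re(\overline{\psi}\,D_\mu\psi)=\tfrac12\partial_\mu|\psi|^2$, since the $A_\mu$-part of $D_\mu$ only adds a purely imaginary multiple of $|\psi|^2$. Second, the covariant integration-by-parts identity $\int_\Omega \overline{\phi}\,D_j\psi\,\d x=-\int_\Omega \overline{D_j\phi}\,\psi\,\d x$ holds for $j=1,2$; this follows from ordinary integration by parts together with $\overline{D_j\phi}=\partial_j\overline{\phi}-\tfrac{\i}{\hbar}A_j\overline{\phi}$, the boundary terms vanishing by periodicity when $\Omega=\bbt^2$ and by the decay of the smooth solution when $\Omega=\bbr^2$. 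Third, the commutator relation $[D_0,D_j]\psi=\tfrac{\i}{\hbar}(\partial_0 A_j-\partial_j A_0)\psi$, combined with the Chern--Simons equations $\eqref{Chern-Simons-Schrodinger}_2$, identifies the curvature as $\partial_0 A_1-\partial_1 A_0=-\hbar\,\Im(\overline{\psi}\,D_2\psi)$ and $\partial_0 A_2-\partial_2 A_0=\hbar\,\Im(\overline{\psi}\,D_1\psi)$.

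For the conservation of charge, I multiply $\eqref{Chern-Simons-Schrodinger}_1$ by $\overline{\psi}$, integrate over $\Omega$, and take the imaginary part. The term $\i\hbar\,\overline{\psi}\,D_0\psi$ contributes $\hbar\,\Re(\overline{\psi}\,D_0\psi)=\tfrac{\hbar}{2}\partial_t|\psi|^2$; the covariant Laplacian term becomes $-\tfrac{\hbar^2}{2m}\int_\Omega\sum_j|D_j\psi|^2\,\d x$ by covariant integration by parts, hence is real and drops out; and $V'(|\psi|^2)|\psi|^2$ is real. This leaves $\tfrac{\hbar}{2}\frac{\d}{\d t}\int_\Omega|\psi|^2\,\d x=0$.

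For the conservation of energy, I multiply $\eqref{Chern-Simons-Schrodinger}_1$ by $\overline{D_0\psi}$, integrate over $\Omega$, and take the real part. The term $\i\hbar|D_0\psi|^2$ has zero real part; the potential term contributes $-\Re\int_\Omega V'(|\psi|^2)\psi\,\overline{D_0\psi}\,\d x=-\tfrac12\int_\Omega V'(|\psi|^2)\partial_t|\psi|^2\,\d x=-\tfrac12\frac{\d}{\d t}\int_\Omega V(|\psi|^2)\,\d x$. For the covariant Laplacian term I apply covariant integration by parts to write $\int_\Omega\overline{D_0\psi}\,D_jD_j\psi\,\d x=-\int_\Omega\overline{D_jD_0\psi}\,D_j\psi\,\d x$, and then substitute $D_jD_0\psi=D_0D_j\psi-\tfrac{\i}{\hbar}(\partial_0 A_j-\partial_j A_0)\psi$. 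The piece involving $D_0D_j\psi$ gives $-\tfrac12\frac{\d}{\d t}\int_\Omega\tfrac{\hbar^2}{2m}\sum_j|D_j\psi|^2\,\d x$, after converting the $\partial_t$ acting on $|D_j\psi|^2$ into $D_0$ (the difference being an imaginary multiple of $|D_j\psi|^2$, which vanishes under $\Re$); the remaining piece is proportional to $\sum_j(\partial_0 A_j-\partial_j A_0)\,\Im(\overline{\psi}\,D_j\psi)$, which is identically zero once the two Chern--Simons relations for $\partial_0A_j-\partial_jA_0$ are inserted. Adding the three contributions yields $\frac{\d}{\d t}\int_\Omega\big(\tfrac{\hbar^2}{2m}\sum_j|D_j\psi|^2+V(|\psi|^2)\big)\,\d x=0$.

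The step I expect to require the most care is the energy identity: because each $D_j$ carries the time-dependent potential $A_j$, one cannot differentiate naively under the covariant derivative, and the extra contributions produced by $\partial_t A_j$ must be tracked. The gauge-covariant bookkeeping above packages precisely these contributions into the curvature $\partial_0 A_j-\partial_j A_0$, and it is exactly the Chern--Simons constitutive laws $\eqref{Chern-Simons-Schrodinger}_2$ that force the resulting term to cancel; checking this cancellation, together with justifying the vanishing of the boundary terms on $\bbr^2$, is the crux of the argument.
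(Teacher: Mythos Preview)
Your proposal is correct and follows essentially the same route as the paper: multiply $\eqref{Chern-Simons-Schrodinger}_1$ by $\overline{\psi}$ and take the imaginary part for the charge, multiply by $\overline{D_0\psi}$ and take the real part for the energy, using the covariant product rule and the commutator $[D_0,D_j]\psi=\tfrac{\i}{\hbar}(\partial_0A_j-\partial_jA_0)\psi$ together with $\eqref{Chern-Simons-Schrodinger}_2$ to make the curvature contribution vanish. The only cosmetic difference is that the paper first writes a local identity with a divergence term and then integrates, whereas you integrate and use covariant integration by parts directly.
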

	
	\begin{proof}
	We first note that the covariant derivative satisfies the following calculation rules:
	\begin{align}\label{rule}
		\pa_{\mu}(\bar{\phi}\psi)=\bar{\phi}D_{\mu}\psi+\overline{D_{\mu}\phi}\psi,\quad
		D_{\mu}D_{\nu}\psi= D_{\nu}D_{\mu}\psi+\frac{i}{\hbar}(\pa_{\mu}A_{\nu}-\pa_{\nu}A_{\mu}) \psi.
	\end{align}	
	
	\noindent $\bullet$ (Estimate for charge): We multiply \eqref{Chern-Simons-Schrodinger} by $\bar{\psi}$ and take its imaginary part to derive

\begin{align}\label{P-1}
	\hbar \pa_t|\psi|^2 + \hbar^2\sum_{j=1} \pa_j \mbox{Im}(\bar{\psi}D_j\psi)=0.
\end{align}
By integrating \eqref{P-1} over $\Omega$, we obtain the desired result.
	
	\vspace{0.3cm}
	\noindent $\bullet$ (Estimate for energy):  Similarly, we multiply \eqref{Chern-Simons-Schrodinger} by $\overline{D_0\psi}$ and consider  its real part to observe
	\begin{align}\label{P-2}
	\hbar^2\sum_{j=1}^2\pa_j\mbox{Re}(\overline{D_0\psi}D_j\psi)-\hbar^2\sum_{j=1}^2 \mbox{Re}(\overline{D_jD_0\psi} D_j\psi)-\pa_t\big(V(|\psi|^2)\big)=0.
	\end{align}	
We use \eqref{rule} and $\eqref{Chern-Simons-Schrodinger}_{2}$ to estimate the second term as
\begin{align*}
	\begin{aligned}
		\sum_{j=1}^2 \mbox{Re}(\overline{D_jD_0\psi}D_j\psi)=\sum_{j=1}^2\Big( \mbox{Re}(\overline{D_0D_j\psi}D_j\psi)
		+\frac{1}{\hbar}(\pa_0A_j-\pa_jA_0)\mbox{Im}(\bar{\psi}D_j\psi)\Big)=\pa_t\sum_{j=1}^2\frac{1}{2} |D_j\psi|^2.
	\end{aligned}
\end{align*}
Inserting the above identity to \eqref{P-2} and integrating it over $\Omega$, we conclude the desired estimate. 
\end{proof}
We define natural energy functions for the scaled SCS system \eqref{CSS-e}	and \eqref{CSS-e-incomp}, respectively, as 
\begin{align*}
	\mathcal{E}^\e(t,x)&:=\frac{\e^2}{2}\sum_{j=1}^2|D_j^\e\psi^\e(t,x)|^2+V(|\psi^{\e}(t,x)|^2) := \frac{\e^2}{2}| D^\e\psi^\e(t,x)|^2+\frac{1}{\gamma}|\psi^{\e}|^{2\gamma},\\
	\widetilde{\mathcal{E}}^\e(t,x)&:=\frac{\e^{2-2\alpha}}{2}\sum_{j=1}^2|\widetilde{D}^{\e}_j\psi^\e(t,x)|^2+\e^{-2\alpha}\widetilde{V}(|\psi^\e|^2):=\frac{\e^{2-2\alpha}}{2}|\widetilde{D}^\e\psi^\e(t,x)|^2+\frac{\e^{-2\alpha}}{\gamma}(|\psi^\e|^\gamma-1)^2,
\end{align*}
where  $D^\e_j:=\pa_j+\frac{\i}{\e}A^{\e}_j$ and $\widetilde{D}_j^\e :=\pa_j+\frac{\textup{i}}{\e^{1+\alpha}}A_j^{\e}$. By directly applying Lemma \ref{L2.1} to each scaled SCS system \eqref{CSS-e} and \eqref{CSS-e-incomp} respectively, one can obtain the following conservation laws, written in terms of the corresponding total charge and energy. We omit the detailed proof.

	\begin{proposition}\label{P2.1}
		Let $(\psi^\e,A^\e_0, A^\e)$ be a global solution to the SCS equations \eqref{CSS-e} subject to the initial data $(\psi^\e_{\textup{in}}, A^\e_{\textup{in}})$. Then, the total charge and the total energy are conserved:
		\[\int_{\Omega}|\psi^\e(t,x)|^2\,\d x=\int_{\Omega}|\psi^\e_{\textup{in}}(x)|^2\,\d x,\quad \int_{\Omega}\mathcal{E}^\e(t,x)\,\d x = \int_{\Omega}\mathcal{E}^\e(0,x)\,\d x,\quad t\ge0.\]
	\end{proposition}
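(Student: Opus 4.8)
The plan is to obtain the statement as a direct specialization of Lemma~\ref{L2.1} with $m=1$ and $\hbar=\e$. The first step is to observe that, since the initial data $(\psi^\e_{\textup{in}},A^\e_{\textup{in}})$ satisfy the compatibility conditions \eqref{compatibility}, the Coulomb-gauge system \eqref{CSS-e} is equivalent to the covariant formulation \eqref{Chern-Simons-Schrodinger} (with $m=1$, $\hbar=\e$), and the constraints $\nabla\times A^\e=-|\psi^\e|^2$, $\nabla\cdot A^\e=0$ together with the Faraday-type relations \eqref{additional_2} hold for all $t\ge0$; these are precisely the ingredients used in the proof of Lemma~\ref{L2.1}. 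Consequently, the two conserved quantities of that lemma — the total charge $\int_\Omega|\psi|^2\,\d x$ and the total energy $\int_\Omega\frac{\hbar^2}{2}\sum_{j}|D_j\psi|^2+V(|\psi|^2)\,\d x$ — translate, upon substituting $V(\rho)=\frac1\gamma\rho^\gamma$, into $\int_\Omega|\psi^\e|^2\,\d x$ and $\int_\Omega\mathcal{E}^\e\,\d x$, respectively, and integrating the resulting conservation laws in time from $0$ to $t$ yields the two identities.

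To make the translation explicit, I would reproduce the two local balance laws of Lemma~\ref{L2.1} in the scaled variables. Multiplying $\eqref{CSS-e}_1$ by $\overline{\psi^\e}$ and taking the imaginary part gives, as in \eqref{P-1},
\[
\e\,\partial_t|\psi^\e|^2 + \e^2\sum_{j=1}^2\partial_j\,\Im\!\big(\overline{\psi^\e}\,D^\e_j\psi^\e\big)=0,
\]
while multiplying $\eqref{CSS-e}_1$ by $\overline{D^\e_0\psi^\e}$, taking the real part, and using the covariant calculus rules \eqref{rule} together with the field equations \eqref{additional_2} exactly as in the derivation of \eqref{P-2} gives
\[
\partial_t\Big(\tfrac{\e^2}{2}\sum_{j=1}^2|D^\e_j\psi^\e|^2 + V(|\psi^\e|^2)\Big)=\e^2\sum_{j=1}^2\partial_j\,\Re\!\big(\overline{D^\e_0\psi^\e}\,D^\e_j\psi^\e\big),
\]
where $D^\e_\mu:=\partial_\mu+\frac{\i}{\e}A^\e_\mu$. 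Integrating these identities over $\Omega$ removes the divergence terms and produces $\frac{\d}{\d t}\int_\Omega|\psi^\e|^2\,\d x=0$ and $\frac{\d}{\d t}\int_\Omega\mathcal{E}^\e\,\d x=0$, which is the claim.

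The computation is essentially routine, and the only point I would treat with some care is the vanishing of the boundary contributions after integrating the divergence terms. On $\Omega=\mathbb{T}^2$ this is automatic by periodicity. On $\Omega=\mathbb{R}^2$ it requires that the global solution lie in a function class with sufficient spatial decay — e.g. $\psi^\e(t,\cdot)$ and its covariant derivatives in $L^2$, with the currents $\Im(\overline{\psi^\e}D^\e_j\psi^\e)$ and $\Re(\overline{D^\e_0\psi^\e}D^\e_j\psi^\e)$ integrable — so that $\int_{\mathbb{R}^2}\partial_j(\cdots)\,\d x=0$; this decay is part of the solution class in which \eqref{CSS-e} is posed, so I would record it as a standing assumption. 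With this understood the two conservation laws follow, and the corresponding statement for \eqref{CSS-e-incomp} is obtained in the same way, now applying Lemma~\ref{L2.1} with $\hbar=\e^{1+\alpha}$ and with $\widetilde V$ in place of $V$ after the additional rescaling.
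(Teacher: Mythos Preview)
Your proposal is correct and follows exactly the approach the paper indicates: the paper states that Proposition~\ref{P2.1} is obtained ``by directly applying Lemma~\ref{L2.1} to each scaled SCS system'' and omits the detailed proof, which is precisely the specialization $m=1$, $\hbar=\e$ you carry out. In fact you supply more detail than the paper does, including the explicit scaled local balance laws and a remark on the vanishing of boundary terms, both of which are appropriate and accurate.
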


	\begin{proposition}\label{P2.2}
		Let $(\psi^\e,A^\e_0,A^\e)$ be a global solution to the SCS equations \eqref{CSS-e-incomp} subject to the initial data $(\psi^\e_{\textup{in}}, A^\e_{\textup{in}})$. Then, the total charge and the total energy are conserved:
		\[\int_{\Omega}|\psi^\e(t,x)|^2\,\d x=\int_{\Omega}|\psi^\e_{\textup{in}}(x)|^2\,\d x,\quad \int_{\Omega}\widetilde{\mathcal{E}}^\e(t,x)\,\d x = \int_{\Omega}\widetilde{\mathcal{E}}^\e(0,x)\,\d x,\quad t\ge0.\]
	\end{proposition}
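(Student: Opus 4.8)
The plan is to obtain Proposition \ref{P2.2} from Lemma \ref{L2.1} by recognizing \eqref{CSS-e-incomp} as a member of the family \eqref{Chern-Simons-Schrodinger} written in rescaled variables. More precisely, \eqref{CSS-e-incomp} is exactly the Coulomb-gauge form of \eqref{Chern-Simons-Schrodinger} with $m=1$, $\hbar=\e$ and potential $\widetilde{V}(\rho)=\frac1\gamma(\rho^{\gamma/2}-1)^2$, after the relabeling $A_0\mapsto\e^\alpha A_0^\e$, $A\mapsto\e^{-\alpha}A^\e$ together with a rescaling of the time variable; under this relabeling the uniform covariant derivative $\pa_j+\frac{\i}{\e}A_j$ turns into $\widetilde{D}_j^\e=\pa_j+\frac{\i}{\e^{1+\alpha}}A_j^\e$, while the curl/divergence constraints \eqref{additional} become $\nabla\times A^\e=-\e^\alpha|\psi^\e|^2$ and $\nabla\cdot A^\e=0$, which are precisely the constraints consistent with $\eqref{CSS-e-incomp}_3$.

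First I would carry out this identification term by term: check that with $\hbar=\e$, $m=1$ the contribution $\i\hbar D_0\psi$ produces the time derivative together with $-\e^\alpha A_0^\e\psi^\e$ after the relabeling, that $\frac{\hbar^2}{2m}\sum_j D_jD_j\psi$ produces the block $\frac{\e^2}{2}\Delta\psi^\e+\i\e^{1-\alpha}A^\e\cdot\nabla\psi^\e-\frac{\e^{-2\alpha}}{2}|A^\e|^2\psi^\e$ (using $\nabla\cdot A^\e=0$), and that $-\widetilde{V}'(|\psi|^2)\psi=-(|\psi|^\gamma-1)|\psi|^{\gamma-2}\psi$ matches the nonlinearity in $\eqref{CSS-e-incomp}_1$; likewise that $\eqref{Chern-Simons-Schrodinger}_{2,3}$ relabel into $\eqref{CSS-e-incomp}_{2,3}$, and that the Coulomb gauge and the compatibility conditions \eqref{compatibility} are preserved so that the two formulations remain equivalent for smooth solutions. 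Once the identification is made, Lemma \ref{L2.1} applied to this member of the family gives that $\int_\Omega|\psi^\e|^2\,\d x$ and $\int_\Omega\big(\frac{\e^2}{2}\sum_{j}|\widetilde{D}_j^\e\psi^\e|^2+\widetilde{V}(|\psi^\e|^2)\big)\,\d x$ are constant in the original time, hence also in the rescaled time (a constant multiple of the time variable does not affect being time-independent). Since $\widetilde{\mathcal{E}}^\e=\e^{-2\alpha}\big(\frac{\e^2}{2}\sum_{j}|\widetilde{D}_j^\e\psi^\e|^2+\widetilde{V}(|\psi^\e|^2)\big)$ and $\e^{-2\alpha}$ is a fixed constant, multiplying the second conserved quantity by $\e^{-2\alpha}$ yields exactly the conservation of $\int_\Omega\widetilde{\mathcal{E}}^\e(t,x)\,\d x$.

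An equivalent route, which is what ``directly applying Lemma \ref{L2.1}'' really stands for, is to repeat the two computations of its proof on \eqref{CSS-e-incomp} itself: multiply $\eqref{CSS-e-incomp}_1$ by $\overline{\psi^\e}$ and take the imaginary part for the charge, and multiply by a suitable multiple of the covariant time derivative $\pa_t\psi^\e+\frac{\i}{\e}A_0^\e\psi^\e$ and take the real part for the energy, then integrate by parts using the product rule \eqref{rule} and use $\eqref{CSS-e-incomp}_{2}$ (equivalently, the rescaled form $\pa_t A_1^\e-\e^\alpha\pa_1A_0^\e=-\e\Im(\overline{\psi^\e}\widetilde{D}_2^\e\psi^\e)$ and its companion of \eqref{additional_2}) to cancel the curvature cross term $\propto\sum_j(\pa_0A_j^\e-\e^\alpha\pa_jA_0^\e)\Im(\overline{\psi^\e}\widetilde{D}_j^\e\psi^\e)$. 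The main obstacle in either route is purely the bookkeeping of the powers of $\e$ introduced by the two different scalings of $A_0^\e$ and $A^\e$ --- in particular, verifying that the covariant derivatives that actually appear are $\widetilde{D}_j^\e$ (not $D_j^\e$), that the Chern--Simons structure still produces an \emph{exact} cancellation of the cross term despite the nonuniform covariant derivatives, and that the trailing constant $\e^{-2\alpha}$ is precisely what distinguishes $\widetilde{\mathcal{E}}^\e$ from the ``bare'' transformed energy. Finally, on $\Omega=\bbr^2$ one needs enough spatial decay of $\psi^\e$ and $A^\e$ (part of the standing regularity assumptions on global solutions) to discard the boundary terms from the integrations by parts; on $\Omega=\bbt^2$ these vanish automatically.
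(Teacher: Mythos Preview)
Your proposal is correct and follows essentially the same approach as the paper, which simply states that Proposition~\ref{P2.2} follows ``by directly applying Lemma~\ref{L2.1}'' to the scaled system \eqref{CSS-e-incomp} and omits the details. Your identification of \eqref{CSS-e-incomp} as \eqref{Chern-Simons-Schrodinger} with $m=1$, $\hbar=\e$, potential $\widetilde V$, and the relabeling $(A_0,A)\mapsto(\e^\alpha A_0^\e,\e^{-\alpha}A^\e)$ together with the time rescaling is exactly the mechanism behind that sentence, and your observation that the conserved energy of Lemma~\ref{L2.1} equals $\e^{2\alpha}\int_\Omega\widetilde{\mathcal E}^\e\,\d x$ is the missing bookkeeping step the paper leaves implicit.
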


	\subsection{Identifying the hydrodynamic equations}
	We now present the detailed derivation of the hydrodynamic system for the SCS equations. 
	\subsubsection{Derivation of \eqref{CSS-hydro-e}}\label{sec:2.2.1}
	As we discussed in the introduction, we consider the well-known Madelung transformation for $\psi^\e$:
	\begin{equation}\label{madelung}
	\psi^\e(t,x) = \sqrt{\rho^\e(t,x)} \exp\left(\frac{\i }{\e}S^\e(t,x)\right)
	\end{equation}
	and then substitute the representation \eqref{madelung} of $\psi^\e$ into \eqref{CSS-e}. Then, the imaginary part of the Schr\"odinger equation \eqref{CSS-e}$_1$ becomes
	\begin{equation*}\label{eq-rho}
	\partial_t\sqrt{\rho^\e} +\nabla\sqrt{\rho^\e}\cdot\nabla S^\e +\frac{\sqrt{\rho^\e}}{2}\Delta S^\e +A^\e\cdot \nabla \sqrt{\rho^\e}=0,
	\end{equation*}
	which implies the continuity equation for the density $\rho^\e$:
	\[\partial_t \rho^\e +\nabla\cdot (\rho^\e u^\e) = 0,\quad u^\e:=\nabla S^\e+A^\e.\]
	We note that the definition of $u^\e$ implies
	\begin{equation}\label{uA}
		\partial_i u^\e_j-\partial_j u^\e_i = \partial_i(\partial_j S^\e+A^\e_j)-\partial_j(\partial_i S^\e+A^\e_i) =  \partial_i A^\e_j -\partial_j A^\e_i,\quad\mbox{i.e.},\quad \nabla\times u^\e = \nabla\times A^\e.
	\end{equation}
	On the other hand, we note that the right-hand sides of \eqref{additional_2} become
	\begin{align*}
	-&\e\Im\left(\overline{\psi^\e}D^\e_2\psi^\e\right) =-\e\Im\left(\sqrt{\rho^\e}\left(\pa_2\sqrt{\rho^\e}+\frac{\i}{\e}\sqrt{\rho^\e}\pa_2S^\e+\frac{\i}{\e}A_2^\e\sqrt{\rho^\e}\right)\right)=-\rho^\e\pa_2S^\e-\rho^\e A_2^\e=-\rho^\e u^\e_2,\\
	&\e\Im\left(\overline{\psi^\e}D^\e_1\psi^\e\right)=\rho^\e\pa_1S^\e+\rho^\e A_1^\e=\rho^\e u_1^\e,
	\end{align*}
    and the right-hand side of \eqref{CSS-e}$_2$ becomes
	\begin{align*}
	\e&\Im(Q_{12}(\overline{\psi^\e},\psi^\e))+\pa_1(A_2^\e|\psi^\e|^2)-\pa_2(A_1^\e|\psi^\e|^2)\\
	&=\e\Im\left(\pa_1\overline{\psi^\e}\pa_2\psi^\e-\pa_2\overline{\psi^\e}\pa_1\psi^\e\right)+\pa_1(A_2^\e\rho^\e)-\pa_2(A_1^\e\rho^\e)\\
	&=2\sqrt{\rho^\e}(\pa_1\sqrt{\rho^\e}\pa_2S^\e-\pa_2\sqrt{\rho^\e}\pa_1S^\e)+(\pa_1A_2^\e-\pa_2A_1^\e)\rho^\e +A_2^\e\pa_1\rho^\e-A_1^\e\pa_2\rho^\e\\
	&=\pa_1\rho^\e(\pa_2S^\e+A_2^\e)-\pa_2\rho^\e(\pa_1S^\e+A_1^\e)+(\nabla\times A^\e)\rho^\e\\
	&=(\pa_1\rho^\e) u_2^\e-(\pa_2\rho^\e) u_1^\e+(\nabla\times A^\e)\rho^\e = \nabla\rho^\e \times u^\e+(\nabla\times u^\e) \rho^\e,
	\end{align*}
	where we used \eqref{uA} in the last identity. Therefore, the Chern-Simons parts \eqref{CSS-e}$_{2,3}$ become
	\begin{align*}
	\begin{aligned}
     &\Delta A^\e_0 =\nabla\times(\rho^\e u^\e),\quad
	\Delta A^{\e}=-(\nabla \rho^{\e})^{\perp},
	\end{aligned}
	\end{align*}
	where $(u^\e)^\perp :=(-u_2^\e,u_1^\e)$.
We also note that the additional relation \eqref{additional_2} can be written as
\begin{align}\label{additional-e}
		\pa_t A^\e =\nabla A_0^\e +\rho^\e (u^\e)^\perp.
\end{align}
 Finally, to identify the momentum equation, we consider the real part of the Schr\"odinger equation \eqref{CSS-e}$_1$:
	\begin{equation}\label{eq-S}
		\partial_t S^\e+A^\e_0-\frac{\e^2}{2}\frac{\Delta \sqrt{\rho^\e}}{\sqrt{\rho^\e}}+\frac{1}{2}|\nabla S^\e+A^\e|^2+V'(\rho^\e)=0,
	\end{equation}
	and then take a gradient to \eqref{eq-S} to obtain
	
	\begin{equation}\label{eq-nablaS}
	\partial_t (\nabla S^\e) + \frac{1}{2}\nabla|\nabla S^\e+A^\e|^2+\nabla A^\e_0 + V''(\rho^\e)\nabla\rho^\e = \frac{\e^2}{2}\nabla\left(\frac{\Delta \sqrt{\rho^\e}}{\sqrt{\rho^\e}}\right).
	\end{equation}
	We note that for any $u\in\bbr^2$, the following identity holds:
	\[\frac{1}{2}(\nabla|u|^2)_i = \sum_{j=1}^2 u_j\partial_i u_j = \sum_{j=1}^2 u_j\partial_ju_i +u_j(\partial_iu_j-\partial_ju_i)= (u\cdot \nabla)u_i +\sum_{j=1}^2u_j(\partial_iu_j-\partial_ju_i).\]
	Therefore, we modify the second term in \eqref{eq-nablaS} as
	\begin{align*}
	\frac{1}{2}(\nabla|\nabla S^\e+A^\e|^2)_i=\frac{1}{2}(\nabla|u^\e|^2)_i = (u^\e\cdot \nabla)u^\e_i+\sum_{j=1}^2u^\e_j(\partial_iu^\e_j-\partial_j u^\e_i),
	\end{align*}
	and again use \eqref{uA} and \eqref{additional} to obtain
	\[u^\e_2(\partial_1 u^\e_2-\partial_2 u^\e_1) = u^\e_2(\partial_1A^\e_2-\partial_2 A^\e_1) = -\rho^\e u^\e_2,\quad u^\e_1(\partial_2 u^\e_1-\partial_1 u^\e_2) = u^\e_1(\partial_2A^\e_1-\partial_1 A^\e_2)= \rho^\e u^\e_1.\]
	Hence, the equation \eqref{eq-nablaS} becomes
	\begin{equation}\label{eq-nablaS-2}
	\partial_t(\nabla S^\e) +(u^\e\cdot \nabla) u^\e +\rho^\e (u^\e)^\perp+\nabla A^\e_0 +\frac{\nabla p(\rho^\e)}{\rho^\e} = \frac{\e^2}{2}\nabla\left(\frac{\Delta \sqrt{\rho^\e}}{\sqrt{\rho^\e}}\right),
	\end{equation}
	where $p(\rho):=\rho V'(\rho) -V(\rho)$ is a pressure. Thus, combining the estimates \eqref{eq-nablaS-2} for $\nabla S^\e$ and \eqref{additional-e} for $A^\e_0$, we finally derive the governing equation for $u^\e$: 
	\[\partial_t u^\e +(u^\e\cdot \nabla) u^\e +\frac{\nabla p(\rho^\e)}{\rho^\e} =  \frac{\e^2}{2}\nabla\left(\frac{\Delta \sqrt{\rho^\e}}{\sqrt{\rho^\e}}\right).\]
	In conclusion, the quantities $(\rho^\e,\rho^\e u^\e,A_0^\e,A^\e)$ satisfy \eqref{CSS-hydro-e}:
	\begin{align*}
	&\partial_t \rho^\e +\nabla\cdot(\rho^\e u^\e) = 0,\\
	&\partial_t (\rho^\e u^\e) +\nabla\cdot(\rho^\e u^\e\otimes u^\e) +\nabla p(\rho^\e) =\rho^\e\frac{\e^2}{2}\nabla\left(\frac{\Delta \sqrt{\rho^\e}}{\sqrt{\rho^\e}}\right),\\
	&\Delta A^\e_0 = \nabla	\times(\rho^{\e} u^{\e}),\quad
	\Delta A^{\e}=-(\nabla \rho^{\e})^{\perp}.
	\end{align*}
	\begin{remark}
		Since the total charge $\int_{\Omega}|\psi^\e|^2\,\d x$ is conserved by Proposition \ref{P2.1}, we also obtain that the total density $\int_{\Omega}\rho^\e\,\d x$ is also conserved. Hence, we normalize it to be 1. 
	\end{remark}
	\begin{remark}
	To guarantee the well-posedness of the Poisson equations, one need to impose the boundary condition $A_\mu^\e(t,x) \to 0$ as $|x|\to\infty$ for the case of whole domain, and the normalization condition $\int_{\bbt^2} A_\mu^\e\,\d x=0$ for the case of periodic domain. 
	\end{remark}

	\subsubsection{Derivation of \eqref{CSS-hydro-e-2}} \label{sec:2.2.2}
	Since the overall procedure for deriving \eqref{CSS-hydro-e-2} is the same as the derivation of \eqref{CSS-hydro-e}, we just briefly mention the several key equations.
	Considering the Madelung transformation \eqref{madelung} again, the imaginary part of the Schr\"odinger equation \eqref{CSS-e-incomp}$_1$ becomes
	\[\e^{1+\alpha}\pa_t \sqrt{\rho^\e} +\e\nabla\sqrt{\rho^\e}\cdot \nabla S^\e +\frac{\sqrt{\rho^\e}}{2}\e\Delta S^\e +\e^{1-\alpha} A^\e\cdot \nabla\sqrt{\rho^\e}=0,\]
	which is equivalent to
	\[\pa_t\rho^\e +\nabla\cdot(\rho^\e u^\e) =0,\quad u^\e:=\e^{-\alpha}\nabla S^\e +\e^{-2\alpha}A^\e.\]
	On the other hand, the real part of \eqref{CSS-e-incomp}$_1$ becomes
	\[\e^{\alpha}\pa_t S^\e+\e^\alpha A_0^\e +\frac{\e^{2\alpha}}{2}|\e^{-\alpha}\nabla S^\e+\e^{-2\alpha}A^\e|^2+((\rho^\e)^{\gamma-1}-(\rho^\e)^{\frac{\gamma}{2}-1})=\frac{\e^2}{2}\frac{\Delta \sqrt{\rho^\e}}{\sqrt{\rho^\e}},\]
	and after taking the gradient, we have
	\begin{align}
	\begin{aligned}\label{B-7}
	\pa_t(\e^{-\alpha}\nabla S^\e)&+\e^{-\alpha}\nabla A_0^\e +(u^\e\cdot\nabla)u^\e+\e^{-\alpha}\rho^\e (u^\e)^\perp\\
	&+\e^{-2\alpha}\left((\gamma-1)(\rho^\e)^{\gamma-2}-\left(\frac{\gamma}{2}-1\right)(\rho^\e)^{\frac{\gamma}{2}-2}\right)\nabla\rho^\e=\frac{\e^{2-2\alpha}}{2}\nabla \left(\frac{\Delta \sqrt{\rho^\e}}{\sqrt{\rho^\e}}\right).
	\end{aligned}
	\end{align}
	The Chern-Simons parts \eqref{CSS-e-incomp}$_{2,3}$ and additional propagating condition become
	\begin{align}
		\begin{aligned}\label{B-8}
	 	&\Delta A_0^\e = \nabla \times (\rho^\e u^\e),\quad \Delta A^{\e}=-\e^{\alpha}(\nabla \rho^{\e})^{\perp},\\
			&\e^{-\alpha}\pa_t A^\e =\nabla A_0^\e +\rho^\e (u^\e)^\perp.\\
		\end{aligned}
	\end{align}
	Therefore, we combine \eqref{B-7} and \eqref{B-8}$_2$ to conclude that the momentum equation becomes 
	\[\pa_t (\rho^\e u^\e) +\nabla\cdot(\rho^\e u^\e\otimes u^\e )+\e^{-2\alpha}\nabla\left(\frac{\gamma-1}{\gamma}(\rho^\e)^\gamma-\frac{\gamma-2}{\gamma}(\rho^\e)^{\frac{\gamma}{2}}\right)=\frac{\e^{2-2\alpha}\rho^\e}{2}\nabla\left(\frac{\Delta \sqrt{\rho}}{\sqrt{\rho}}\right).\]
	Therefore, the quantities $(\rho^\e,\rho^\e u^\e,A_0^\e,A^\e)$ satisfy \eqref{CSS-hydro-e-2}:
	\begin{align*}
		&\partial_t \rho^\e +\nabla\cdot(\rho^\e u^\e) = 0,\\
		&\partial_t (\rho^\e u^\e) +\nabla\cdot(\rho^\e u^\e\otimes u^\e) +\e^{-2\alpha}\nabla\left(\frac{\gamma-1}{\gamma}(\rho^\e)^\gamma-\frac{\gamma-2}{\gamma}(\rho^\e)^{\frac{\gamma}{2}}\right) =\rho^\e\frac{\e^{2-2\alpha}}{2}\nabla\left(\frac{\Delta \sqrt{\rho^\e}}{\sqrt{\rho^\e}}\right),\\
		&\Delta A^\e_0 = \nabla	\times(\rho^{\e} u^{\e}),\quad \Delta A^{\e}	=-\e^{\alpha}(\nabla \rho^{\e})^{\perp}.
	\end{align*}

	\subsection{Well-posedness of the system}
	In this part, we briefly review the well-posedness part of the systems \eqref{CSS-e}, \eqref{CSE} and \eqref{CSE-incomp}. The well-posedness of the nonlinear Schr\"odinger-Chern-Simons system has been studied in numerous previous literature. For the focusing potential $V(x)=-cx^2$, the local well-posedness in $H^2$ space and the global existence in $H^1$ with sufficiently small initial data were first studied in \cite{BBS95}, and the local existence and unconditional uniqueness of the $H^1$ initial without smallness assumption on the initial data were proved in \cite{H13}. The low-regularity well-posedness of the system \eqref{CSS-e} is also investigated in \cite{LS16, LST14}. Especially, for the defocusing potential (the case to be discussed in this paper), the existence of unique global  solution $\psi\in C([0,\infty);H^2)$ was recently investigated for arbitrary initial data $\psi_{\textup{in}}\in H^2$ in \cite{L18}. On the other hand, the local-in-time smooth solution ($H^s$ with sufficiently large $s$) to the compressible Euler system \eqref{CSE}$_{1,2}$ and the incompressible Euler system \eqref{CSE-incomp}$_1$ was already well-studied in previous literature, e.g., \cite{P96,M84,MB02}. Therefore, in this paper, we consider the local-in-time smooth solution $(\rho, u)\in C([0,T_*);H^s)$ ($u\in C([0,T_*);H^s)$ for the incompressible fluid) with $s>3$, where $T_*$ is the maximal existence time for the Euler-type equations.

	\subsection{Main theorems}
	We now present the exact statements of the hydrodynamic limit of the scaled Schr\"odinger-Chern-Simons equations, which are the main theorems of the paper. We first consider the hydrodynamic limit from \eqref{CSS-hydro-e} toward \eqref{CSE}. To this end, the following {\it well-prepared initial data} condition is required:
	
	\begin{align}
		\begin{aligned}\label{well-prepared}
	\int_{\Omega}\frac{\rho_{\textup{in}}^\e|u_{\textup{in}}^\e-u_{\textup{in}}|^2}{2}\,\d x+\int_{\Omega}\frac{p(\rho_{\textup{in}}^\e|\rho_{\textup{in}})}{\gamma-1}\,\d x&+\frac{\e^2}{2}\int_{\Omega}|\nabla\sqrt{\rho_{\textup{in}}^\e}|^2\,\d x=\mathcal{O}(\e^\lambda),\quad \lambda>0,
		\end{aligned}
	\end{align}
	where $p(n|\rho):=\frac{\gamma-1}{\gamma}(n^\gamma-\rho^\gamma-\gamma\rho^{\gamma-1}(n-\rho))$. Note that the condition \eqref{well-prepared} holds if
	\[\|u^\e_{\textup{in}}-u_{\textup{in}}\|_{L^\infty} = \mathcal{O}(\e^{\lambda/2}),\quad \|p(\rho^\e_{\textup{in}}|\rho_{\textup{in}})\|_{L^1}=\mathcal{O}(\e^\lambda),\quad \|\nabla\sqrt{\rho_{\textup{in}}^\e}\|_{L^2}=\mathcal{O}(\e^{\lambda/2-1}).\]
	
	\begin{theorem}\label{thm:main-1}
		Suppose $\gamma\ge 2$ if $\Omega=\bbr^2$ or $\gamma> 1$ if $\Omega=\bbt^2$. Let $(\psi^\e,A^\e_0,A^\e)$ be the unique global solution to the Schr\"odinger-Chern-Simons equations \eqref{CSS-e}, subject to the initial data $\psi^\e_{\textup{in}}\in H^2(\Omega)$ and $A^\e_{\textup{in}}$ satisfying the compatibility condition \eqref{compatibility}. Suppose that the non-vacuum condition uniformly holds, i.e., $|\psi^\e(t,x)|>0$ for $(t,x)\in [0,T_*)\times \Omega$ and define the hydrodynamic quantities:
		\[\rho^\e:=|\psi^\e|^2 = \psi^\e\overline{\psi^\e},\quad u^\e := \nabla S^\e +A^\e = \frac{\i\e}{2|\psi^\e|^2}(\psi^\e\nabla \overline{\psi^\e}-\overline{\psi^\e}\nabla\psi^\e)+A^\e.\]
		Moreover, let $(\rho,u,A_0,A)$ be the unique local-in-time smooth solution to the Compressible Euler-Chern-Simons equations \eqref{CSE} for $0\le t< T_*$, subject to the initial data $(\rho_{\textup{in}},u_{\textup{in}}, A_{\textup{in}})$ satisfying the compatibility condition \eqref{compatibility-CE}. Finally, suppose that the initial data satisfy the well-prepared condition \eqref{well-prepared}. Then, for any $0\le t< T_*$, we have
			\begin{align*}
				&\rho^\e(t,\cdot)\to \rho(t,\cdot)\quad \mbox{\textup{in}}\quad L^\gamma(\Omega),\\
				&(\rho^\e u^\e)(t,\cdot) \to (\rho u)(t,\cdot)\quad \mbox{\textup{in}}\quad L^{\frac{2\gamma}{\gamma+1}}(\Omega),\\
				&(\sqrt{\rho^\e} u^\e)(t,\cdot)\to (\sqrt{\rho}u)(t,\cdot),\quad \mbox{\textup{in}}\quad L^2(\Omega),\\
				&A^\e_0 \to A_0\quad\mbox{\textup{in}}\quad L^{2\gamma}(\Omega),\quad \nabla A^\e_0\to \nabla A_0\quad\mbox{\textup{in}}\quad L^{\frac{2\gamma}{\gamma+1}}(\Omega),\\
				&A^\e\to A\quad \mbox{\textup{in}}\quad L^{2\gamma}(\Omega),\quad \nabla A^\e \to\nabla A \quad\mbox{\textup{in}}\quad L^\gamma(\Omega),
			\end{align*}
			as the scaling parameter $\e$ tends to 0.
	\end{theorem}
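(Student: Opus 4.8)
The plan is to control the modulated energy
\[
H^\e(t):=\int_{\Omega}\frac{\rho^\e|u^\e-u|^2}{2}\,\d x+\int_{\Omega}\frac{p(\rho^\e|\rho)}{\gamma-1}\,\d x+\frac{\e^2}{2}\int_{\Omega}|\nabla\sqrt{\rho^\e}|^2\,\d x,
\]
which is nonnegative and, by the well-prepared assumption \eqref{well-prepared}, starts at $H^\e(0)=\mathcal O(\e^\lambda)$. From the Madelung identity $\e D^\e\psi^\e=\bigl(\e\nabla\sqrt{\rho^\e}+\i\sqrt{\rho^\e}u^\e\bigr)e^{\i S^\e/\e}$ one has $\tfrac12|\e D^\e\psi^\e-\i u\,\psi^\e|^2=\tfrac{\e^2}{2}|\nabla\sqrt{\rho^\e}|^2+\tfrac12\rho^\e|u^\e-u|^2$ pointwise, so that $H^\e$ is exactly the modulated energy attached to $\psi^\e$; identifying these two objects is the purpose of Section \ref{sec:3}, and it also lets one compute the evolution of $H^\e$ at the level of $\psi^\e$, where the $H^2$-regularity in the hypotheses suffices. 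A structural observation makes the Chern--Simons coupling essentially invisible to this estimate: as shown in Section \ref{sec:2.2.1}, in the $u^\e$-equation the magnetic force $\rho^\e(u^\e)^\perp$ cancels the term $\nabla A_0^\e$ produced by $\pa_t A^\e$, so \eqref{CSS-hydro-e}$_{1,2}$ is a closed quantum-Euler (Euler--Korteweg type) system in $(\rho^\e,u^\e)$, with the gauge potentials $(A_0^\e,A^\e)$ slaved to it by the Poisson equations \eqref{CSS-hydro-e}$_3$; the same is true of \eqref{CSE}. Hence the real task is the quantum-Euler limit $(\rho^\e,u^\e)\to(\rho,u)$, and the field convergences will follow by elliptic regularity afterward.

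\emph{Step 1: the relative-entropy inequality.} First I would differentiate $H^\e$ in time, inserting the continuity and momentum equations of \eqref{CSS-hydro-e} and of the smooth target \eqref{CSE}. The classical hyperbolic relative-entropy cancellations (the $(\rho^\e,\rho^\e u^\e)$-energy being convex since $V(\rho)=\tfrac1\gamma\rho^\gamma$ is convex) then produce an identity of the form
\[
\frac{\d}{\d t}H^\e(t)=-\int_{\Omega}\nabla u:\Bigl(\rho^\e(u^\e-u)\otimes(u^\e-u)+p(\rho^\e|\rho)\,\mathrm{Id}+\e^2\,\nabla\sqrt{\rho^\e}\otimes\nabla\sqrt{\rho^\e}\Bigr)\,\d x+\mathcal R^\e(t),
\]
where the quantum stress tensor $\e^2\nabla\sqrt{\rho^\e}\otimes\nabla\sqrt{\rho^\e}$ emerges by writing the Bohm force as $\tfrac{\e^2}{2}\rho^\e\nabla\bigl(\tfrac{\Delta\sqrt{\rho^\e}}{\sqrt{\rho^\e}}\bigr)=\tfrac{\e^2}{8}\nabla\Delta\rho^\e-\tfrac{\e^2}{2}\nabla\cdot\bigl(\nabla\sqrt{\rho^\e}\otimes\nabla\sqrt{\rho^\e}\bigr)$, pairing it with $u^\e-u$, and combining with $\tfrac{\d}{\d t}\tfrac{\e^2}{2}\int|\nabla\sqrt{\rho^\e}|^2$ evaluated from the continuity equation — all integrations by parts organized through $\nabla\rho^\e=2\sqrt{\rho^\e}\nabla\sqrt{\rho^\e}$ so that at most two derivatives ever fall on the smooth pair $(\rho,u)$ (consistent with $s>3$). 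The remainder $\mathcal R^\e$ is a finite sum of terms each carrying an explicit $\e^2$ and containing only $W^{2,\infty}$-bounded target quantities together with at most one factor of $\nabla\sqrt{\rho^\e}$ or a factor of $\sqrt{\rho^\e}$ (with $\|\sqrt{\rho^\e}\|_{L^2}^2=\int_\Omega\rho^\e\,\d x\equiv1$ by Proposition \ref{P2.1}), so that $|\mathcal R^\e(t)|\le C\e^2+C\e^2\|\nabla\sqrt{\rho^\e}(t)\|_{L^2}\le C\e^2+\tfrac12H^\e(t)$ by Young's inequality, using $\|\nabla\sqrt{\rho^\e}\|_{L^2}\le\e^{-1}\sqrt{2H^\e}$. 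The displayed integral is at most $C\bigl(\|u(t)\|_{W^{1,\infty}},\gamma\bigr)H^\e(t)$, since $\nabla u$ contracts against each of the three (nonnegative) terms, giving quantities bounded by $\|\nabla u\|_{L^\infty}$ times the corresponding piece of $H^\e$. As $\|u(t)\|_{W^{1,\infty}}\le C\|u(t)\|_{H^s}$ is bounded on every $[0,T]$ with $T<T_*$, Grönwall's inequality yields $H^\e(t)\le C_T(\e^\lambda+\e^2)$ on $[0,T]$, hence $H^\e\to0$ uniformly on $[0,T]$ as $\e\to0$.

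\emph{Step 2: extracting the stated convergences.} By the standard coercivity of the relative internal energy $p(\cdot|\rho)$ — on $\Omega=\bbr^2$ this is exactly where the hypothesis $\gamma\ge2$ is used ($p(n|\rho)\gtrsim|n-\rho|^\gamma$ with a constant depending only on $\|\rho\|_{L^\infty}$, so no lower bound on $\rho$ is required), while on $\Omega=\bbt^2$ it holds for $\gamma>1$ using the uniform $L^\gamma$-bound on $\rho^\e$ from Proposition \ref{P2.1} together with positivity of the smooth profile — the bound $H^\e\to0$ forces $\rho^\e(t)\to\rho(t)$ in $L^\gamma(\Omega)$. Writing
\[
\sqrt{\rho^\e}u^\e-\sqrt{\rho}\,u=\sqrt{\rho^\e}(u^\e-u)+(\sqrt{\rho^\e}-\sqrt{\rho})u,\qquad
\rho^\e u^\e-\rho u=\sqrt{\rho^\e}\bigl(\sqrt{\rho^\e}u^\e-\sqrt{\rho}\,u\bigr)+(\sqrt{\rho^\e}-\sqrt{\rho})\sqrt{\rho}\,u,
\]
and using $|\sqrt a-\sqrt b|\le\sqrt{|a-b|}$ (so $\|\sqrt{\rho^\e}-\sqrt{\rho}\|_{L^{2\gamma}}^{2\gamma}\le\|\rho^\e-\rho\|_{L^\gamma}^{\gamma}\to0$), the uniform bound on $\|\sqrt{\rho^\e}\|_{L^{2\gamma}}=\|\rho^\e\|_{L^\gamma}^{1/2}$, Hölder's inequality with $\tfrac{\gamma+1}{2\gamma}=\tfrac1{2\gamma}+\tfrac12$, and — when $\Omega=\bbr^2$ — the uniform mass bound $\int_\Omega\rho^\e\,\d x\equiv1$ with the decay of $u$ at infinity, one obtains $\sqrt{\rho^\e}u^\e\to\sqrt{\rho}\,u$ in $L^2(\Omega)$ and $\rho^\e u^\e\to\rho u$ in $L^{2\gamma/(\gamma+1)}(\Omega)$. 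Finally, since $A_0^\e-A_0=\Delta^{-1}\nabla\times(\rho^\e u^\e-\rho u)$ and $A^\e-A=-\Delta^{-1}\bigl(\nabla(\rho^\e-\rho)\bigr)^\perp$ (with the decay, resp.\ zero-mean, normalization of the Poisson problem), Calder\'on--Zygmund estimates give $\|\nabla A_0^\e-\nabla A_0\|_{L^{2\gamma/(\gamma+1)}}\lesssim\|\rho^\e u^\e-\rho u\|_{L^{2\gamma/(\gamma+1)}}$ and $\|\nabla A^\e-\nabla A\|_{L^\gamma}\lesssim\|\rho^\e-\rho\|_{L^\gamma}$, both tending to $0$, while the two-dimensional Hardy--Littlewood--Sobolev inequality (resp.\ Fourier-multiplier bounds on $\bbt^2$), using on $\bbr^2$ the uniform $L^1\cap L^\gamma$-bounds on $\rho^\e$, upgrades these to $A_0^\e\to A_0$ and $A^\e\to A$ in $L^{2\gamma}(\Omega)$. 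This yields all the asserted limits.

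\emph{Where the difficulty lies.} The delicate point is Step 1, and within it the treatment of the Bohm potential: one must check that, once combined with the evolution of the capillarity energy $\tfrac{\e^2}{2}\int|\nabla\sqrt{\rho^\e}|^2$, it contributes only the quantum stress term (absorbable into $\|\nabla u\|_{L^\infty}H^\e$) and remainders that carry an explicit $\e^2$ and depend on $(\rho,u)$ only through norms controlled by $s>3$, with no surviving uncontrolled derivative of $\rho^\e$ or of $u^\e$. By contrast the Chern--Simons coupling poses no difficulty in the limit: it is annihilated in the modulated-energy estimate by the cancellation noted in Step~0, and resurfaces only through the elliptic post-processing of Step 2. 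A secondary technical wrinkle is the vacuum behaviour at infinity on $\bbr^2$, which is precisely what the hypothesis $\gamma\ge2$ is there to control.
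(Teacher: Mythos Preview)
Your proposal is correct and follows essentially the same route as the paper: modulated-energy/relative-entropy control via Gr\"onwall (Proposition~\ref{P4.1}), followed by extraction of the hydrodynamic convergences through the coercivity of $p(\cdot|\rho)$ (Lemma~\ref{L4.1}) and elliptic post-processing for the gauge fields via Hardy--Littlewood--Sobolev and Calder\'on--Zygmund (Section~\ref{sec:4.2}). The only organizational difference is in Step~1: rather than your Korteweg-stress rewriting of the Bohm force, the paper invokes Dafermos' abstract relative-entropy identity (Proposition~\ref{prop:rel-ent}) together with the energy conservation $\frac{\d}{\d t}\int_\Omega\mathcal{E}^\e\,\d x=0$, which reduces the entire quantum contribution to the single term $\mathcal I_{12}=-\frac{\e^2}{2}\int_\Omega\rho^\e u\cdot\nabla\bigl(\Delta\sqrt{\rho^\e}/\sqrt{\rho^\e}\bigr)\,\d x$ and then handles it by direct integration by parts against the smooth target velocity $u$; both computations yield the same Gr\"onwall inequality $\frac{\d}{\d t}\mathcal H^\e\le C\mathcal H^\e+C\e^2$.
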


	On the other hand, for the incompressible hydrodynamic limit, we need the following different well-prepared initial data condition
	\begin{align}
		\begin{aligned}\label{well-prepared-2}
		\int_{\Omega}\frac{\rho_{\textup{in}}^\e|u_{\textup{in}}^\e-u_{\textup{in}}|^2}{2}\,\d x&+\frac{\e^{-2\alpha}}{\gamma}\int_{\Omega}((\rho_{\textup{in}}^\e)^{\frac{\gamma}{2}}-1)^2\,\d x
		+\frac{\e^{2-2\alpha}}{2}\int_{\Omega}|\nabla\sqrt{\rho_{\textup{in}}^\e}|^2\,\d x=\mathcal{O}(\e^\lambda),\,\,\lambda>0.
		\end{aligned}
	\end{align}
	\begin{theorem}\label{thm:main-2}
	Suppose $\gamma\ge 2$ and $\Omega=\bbt^2$. Let $(\psi^\e,A^\e_0,A^\e)$ be the unique global solution to the Schr\"odinger-Chern-Simons equations \eqref{CSS-e-incomp}, subject to the initial data $\psi^\e_{\textup{in}}\in H^2(\Omega)$ and $A^\e_{\textup{in}}$ satisfying the compatibility condition \eqref{compatibility}. Suppose that the non-vacuum condition uniformly holds, i.e., $|\psi^\e(t,x)|>0$ for $(t,x)\in[0,T_*)\times\Omega$ and define the hydrodynamic quantities:
	\[\rho^\e:=|\psi^\e|^2 = \psi^\e\overline{\psi^\e},\quad u^\e := \e^{-\alpha}\nabla S^\e +\e^{-2\alpha}A^\e = \frac{\i\e^{1-\alpha}}{2|\psi^\e|^2}(\psi^\e\nabla \overline{\psi^\e}-\overline{\psi^\e}\nabla\psi^\e)+\e^{-2\alpha}A^\e.\]
	Moreover, let $(u,A_0)$ be the unique local-in-time smooth solution to the incompressible Euler-Poisson equations \eqref{CSE-incomp} for $0\le t< T_*$, subject to the initial data $u_{\textup{in}}$. Finally, suppose that the initial data satisfy the well-prepared condition \eqref{well-prepared-2}. Then, for any $0\le t< T_*$, we have
	\begin{align*}
		&\rho^\e(t,\cdot)\to 1\quad \mbox{\textup{in}}\quad L^\gamma(\Omega),\\
		&(\rho^\e u^\e)(t,\cdot) \to u(t,\cdot)\quad \mbox{\textup{in}}\quad L^{\frac{2\gamma}{\gamma+1}}(\Omega),\\
		&(\sqrt{\rho^\e} u^\e)(t,\cdot)\to u(t,\cdot),\quad \mbox{\textup{in}}\quad L^2(\Omega),\\
		&A^\e_0 \to A_0\quad\mbox{\textup{in}}\quad L^{2\gamma}(\Omega),\quad \nabla A^\e_0\to \nabla A_0\quad\mbox{\textup{in}}\quad L^{\frac{2\gamma}{\gamma+1}}(\Omega),\\
		&A^\e\to 0\quad \mbox{\textup{in}}\quad L^{2\gamma}(\Omega),\quad \nabla A^\e \to 0 \quad\mbox{\textup{in}}\quad L^\gamma(\Omega),
	\end{align*}
	as the scaling parameter $\e$ tends to 0. 
\end{theorem}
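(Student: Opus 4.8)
The plan is to run a modulated-energy (relative-energy) argument directly on the hydrodynamic system \eqref{CSS-hydro-e-2}, comparing $(\rho^\e,u^\e)$ with the constant density $\rho\equiv1$ and the incompressible Euler velocity $u$. Since the Madelung identity gives $\frac{\e^{2-2\alpha}}{2}|\widetilde D^\e\psi^\e|^2=\frac{\e^{2-2\alpha}}{2}|\nabla\sqrt{\rho^\e}|^2+\frac12\rho^\e|u^\e|^2$, Proposition \ref{P2.2} is exactly the conservation of $\int_\Omega\big(\frac{\e^{2-2\alpha}}{2}|\nabla\sqrt{\rho^\e}|^2+\frac12\rho^\e|u^\e|^2+\frac{\e^{-2\alpha}}{\gamma}(|\psi^\e|^\gamma-1)^2\big)\d x$. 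I introduce the modulated energy
\[
H^\e(t):=\int_\Omega\Big(\tfrac12\rho^\e|u^\e-u|^2+\tfrac{\e^{-2\alpha}}{\gamma}\big((\rho^\e)^{\gamma/2}-1\big)^2+\tfrac{\e^{2-2\alpha}}{2}|\nabla\sqrt{\rho^\e}|^2\Big)\d x,
\]
so that $H^\e(0)=\mathcal{O}(\e^\lambda)$ by the well-prepared assumption \eqref{well-prepared-2}, and the whole point is the Gr\"onwall bound $H^\e(t)\le C_T\,\e^{\min(\lambda,2\alpha)}$ on $[0,T]$ for every $T<T_*$. All computations are carried out at the level of the Schr\"odinger formulation \eqref{CSS-e-incomp}, which is legitimate for $\psi^\e\in C([0,T];H^2)$ thanks to the non-vacuum hypothesis.

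First I would differentiate $H^\e$. Using the energy conservation above, $\frac{\d}{\d t}H^\e=-\frac{\d}{\d t}\int_\Omega\rho^\e u^\e\cdot u\,\d x+\frac12\frac{\d}{\d t}\int_\Omega\rho^\e|u|^2\,\d x$, and I expand the two right-hand terms with the continuity and momentum equations \eqref{CSS-hydro-e-2}$_{1,2}$ and the incompressible Euler equation \eqref{CSE-incomp}$_1$ written as $\partial_t u=-(u\cdot\nabla)u-\nabla\pi$, $\nabla\cdot u=0$. Incompressibility of the limit produces three cancellations: (i) the pressure contribution $\e^{-2\alpha}\int_\Omega\nabla p(\rho^\e)\cdot u=-\e^{-2\alpha}\int_\Omega p(\rho^\e)\,\nabla\cdot u$ vanishes; (ii) in the Bohm term, integrating by parts and using $\nabla\cdot u=0$ eliminates the top-order piece and leaves $\e^{2-2\alpha}\int_\Omega\nabla\sqrt{\rho^\e}\cdot\big((\nabla u)^{\top}\nabla\sqrt{\rho^\e}\big)\d x$, which is $\le2\|\nabla u\|_{L^\infty}H^\e$ — so, unlike the compressible case, no quantum remainder survives; (iii) the convective terms recombine into $-\int_\Omega\rho^\e\big((u^\e-u)\cdot\nabla\big)u\cdot(u^\e-u)\,\d x$, again bounded by $2\|\nabla u\|_{L^\infty}H^\e$. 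Note that the Chern--Simons fields $A_0^\e,A^\e$ have already been eliminated from \eqref{CSS-hydro-e-2}$_2$, so they do not enter this estimate.

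The main obstacle is the residual acoustic term $\int_\Omega\rho^\e(u^\e-u)\cdot\nabla\pi\,\d x$, which does not vanish because $u^\e$ is not divergence free and which is not a priori small. I would treat it by the continuity equation: $\int_\Omega\rho^\e u^\e\cdot\nabla\pi=-\int_\Omega\pi\,\nabla\cdot(\rho^\e u^\e)=\int_\Omega\pi\,\partial_t\rho^\e$, while $\int_\Omega\rho^\e u\cdot\nabla\pi=\int_\Omega(\rho^\e-1)u\cdot\nabla\pi$ since $\nabla\cdot u=0$; hence the acoustic term equals $\frac{\d}{\d t}\int_\Omega(\rho^\e-1)\pi\,\d x-\int_\Omega(\rho^\e-1)\partial_t\pi\,\d x-\int_\Omega(\rho^\e-1)u\cdot\nabla\pi\,\d x$. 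Absorbing the exact derivative into the corrected functional $\widetilde H^\e:=H^\e-\int_\Omega(\rho^\e-1)\pi\,\d x$ and using the coercivity estimate $\int_\Omega(\rho^\e-1)^2\,\d x\le C_\gamma\int_\Omega\tfrac1\gamma\big((\rho^\e)^{\gamma/2}-1\big)^2\,\d x\le C_\gamma\,\e^{2\alpha}H^\e$ (valid precisely because $\gamma\ge2$), every remaining term is $\le C_T(H^\e+\e^{2\alpha})$, while $|H^\e-\widetilde H^\e|\le\|\pi\|_{L^2}\|\rho^\e-1\|_{L^2}\le\tfrac14H^\e+C_T\e^{2\alpha}$. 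This yields $\frac{\d}{\d t}\big(\widetilde H^\e+C\e^{2\alpha}\big)\le C_T\big(\widetilde H^\e+C\e^{2\alpha}\big)$ on $[0,T]$, and Gr\"onwall's inequality together with $\widetilde H^\e(0)\le H^\e(0)+\|\pi(0)\|_{L^2}\|\rho^\e_{\textup{in}}-1\|_{L^2}+C\e^{2\alpha}=\mathcal{O}(\e^{\min(\lambda,2\alpha)})$ gives $H^\e(t)=\mathcal{O}(\e^{\min(\lambda,2\alpha)})\to0$, uniformly on $[0,T]$. The smooth-solution regularity $u\in C([0,T_*);H^s)$ with $s>3$ is used here to bound $\|u\|_{W^{1,\infty}}$, $\|\pi\|_{H^1}$ and $\|\partial_t\pi\|_{L^2}$ (the last two via $\Delta\pi=-\nabla\cdot((u\cdot\nabla)u)$ and $\partial_t u\in C([0,T_*);H^{s-1})$), and this is also where the torus is essential: $\rho\equiv1$ is a finite-energy state, $L^2$ embeds in $L^{2\gamma/(\gamma+1)}$, and the gauge fields below can be normalized to zero mean.

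Finally I would convert $H^\e(t)\to0$ into the stated convergences. The pointwise inequalities $|\rho-1|^\gamma+(\sqrt\rho-1)^2\le C_\gamma(\rho^{\gamma/2}-1)^2$ for all $\rho\ge0$ (the quotient is continuous on $(0,\infty)\setminus\{1\}$ with finite limits at $0$, $1$ and $\infty$ when $\gamma\ge2$) give $\|\rho^\e-1\|_{L^\gamma}^\gamma+\|\sqrt{\rho^\e}-1\|_{L^2}^2\le C_\gamma\e^{2\alpha}H^\e\to0$, hence $\rho^\e\to1$ in $L^\gamma(\Omega)$; writing $\sqrt{\rho^\e}u^\e-u=\sqrt{\rho^\e}(u^\e-u)+(\sqrt{\rho^\e}-1)u$ and using $\int_\Omega\rho^\e|u^\e-u|^2\le2H^\e$ gives $\sqrt{\rho^\e}u^\e\to u$ in $L^2$, and $\rho^\e u^\e-u=\sqrt{\rho^\e}(\sqrt{\rho^\e}u^\e-u)+(\sqrt{\rho^\e}-1)u$ with H\"older (exponents $\tfrac1{2\gamma}+\tfrac12=\tfrac{\gamma+1}{2\gamma}$) and the uniform bound $\|\rho^\e\|_{L^\gamma}\le C$ gives $\rho^\e u^\e\to u$ in $L^{2\gamma/(\gamma+1)}$. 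For the gauge fields, \eqref{CSS-hydro-e-2}$_3$ reads $\nabla A^\e=-\e^\alpha\nabla\Delta^{-1}\big(\nabla(\rho^\e-1)\big)^\perp$, so Calder\'on--Zygmund bounds give $\|\nabla A^\e\|_{L^\gamma}\le C\e^\alpha\|\rho^\e-1\|_{L^\gamma}\to0$, and the zero-mean normalization on $\bbt^2$ together with Poincar\'e and $W^{1,\gamma}(\bbt^2)\hookrightarrow L^{2\gamma}$ gives $A^\e\to0$ in $L^{2\gamma}$; likewise $\Delta(A_0^\e-A_0)=\nabla\times(\rho^\e u^\e-u)$ yields $\nabla A_0^\e\to\nabla A_0$ in $L^{2\gamma/(\gamma+1)}$ and, via $W^{1,2\gamma/(\gamma+1)}(\bbt^2)\hookrightarrow L^{2\gamma}$, $A_0^\e\to A_0$ in $L^{2\gamma}$.
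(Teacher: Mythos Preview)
Your proof is correct and follows the same modulated-energy route as the paper (Section 5, Lemma \ref{L5.1}): the same functional $H^\e=\widetilde{\mathcal H}^\e$, the same cancellations from $\nabla\cdot u=0$ in the pressure and Bohm terms, the same rewriting of the acoustic term $\int\rho^\e(u^\e-u)\cdot\nabla\pi$ via the continuity equation, and the same passage from $H^\e\to0$ to the stated convergences (including the gauge fields by Calder\'on--Zygmund and Sobolev on $\bbt^2$). The only substantive difference is how the residual $\int(\rho^\e-1)(\partial_t\pi+u\cdot\nabla\pi)$ is bounded: the paper first extracts from conservation of $\widetilde{\mathcal E}^\e$ the \emph{a priori} estimate $\|\rho^\e-1\|_{L^\gamma}\le C\e^{2\alpha/\gamma}$ and applies H\"older, arriving at $\widetilde{\mathcal H}^\e(t)\le C\e^{\min\{\lambda,2\alpha/\gamma\}}$, whereas you use the pointwise $L^2$ coercivity $(\rho-1)^2\le C_\gamma(\rho^{\gamma/2}-1)^2$ (valid exactly for $\gamma\ge2$), which gives $\|\rho^\e-1\|_{L^2}\le C\e^{\alpha}\sqrt{H^\e}$ and the sharper rate $H^\e(t)\le C\e^{\min\{\lambda,2\alpha\}}$; since the theorem claims only convergence, both close the argument.
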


	In the following sections, we present the detailed proof for Theorem \ref{thm:main-1} and Theorem \ref{thm:main-2}. Precisely, in Section \ref{sec:3} and Section \ref{sec:4}, we focus on proving Theorem \ref{thm:main-1}. The main idea is to estimate the modulated energy correspond to the SCS equations, which can control the difference between $(\rho^\e,u^\e)$ and $(\rho,u)$. To estimate the modulated energy, we review the relative entropy framework, which is frequently used in the hydrodynamic limit of the Vlasov-type equations to the Euler-type equations. We reveal that the modulated energy for the SCS system is the quantum-perturbation of the classical relative entropy of the Euler-type equations, and by utilizing the celebrated theory on the relative entropy method, we derive a simple estimate on the modulated energy. Once we have the estimate on the modulated energy, we can obtain the convergence of the hydrodynamic quantities $(\rho^\e,u^\e)$, from which the convergence of the gauge fields $A_\mu^\e$ is derived using the estimates in the elliptic PDE theory. In Section 5, we focus on proving Theorem \ref{thm:main-2}. Unfortunately, the relative entropy theory cannot be applied to the system \eqref{CSE-incomp}. Instead, we directly estimate the modulated energy of the SCS system \eqref{CSS-e-incomp} to obtain the desired convergence.

	\section{Relative entropy and the modulated energy}\label{sec:3}
	\setcounter{equation}{0}
	In this section, we introduce the main technical tools for the hydrodynamic limit of the SCS system towards the compressible ECS system. Precisely, we first review the relative entropy method, which has been used to study the hydrodynamic limit of particle \cite{GLT09,L04,Y91} and kinetic systems \cite{BGL00, BGP00, GJV04, GS04, KV15, MV08, S03, S09}. The main idea of the relative entropy method is to estimate the functional called \textit{relative entropy}, or sometimes called the \textit{modulated energy}, which measures the difference between the solution to the scaled system and the solution to the asymptotic system. This quantity is shown to vanish as the scaling parameter $\e$ tends to 0, guaranteeing the desired convergence of the solution toward the solution to the target equation. On the other hand, the modulated energy estimate was also used to obtaining the hydrodynamic limit for the Schr\"odinger-type equations \cite{LL05,LL08,LW12,LZ06,P02}. Therefore, we first review the relative entropy method for the classical conservation laws and provide the close relation between relative entropy of the compressible Euler equations and the modulated energy of the SCS system, which will be crucially used in the proof of the main theorem.
	
	\subsection{Review on the relative entropy method}
	We consider the following general system of conservation laws:
	\begin{equation}\label{conservation}
	\partial_t U_i +\sum_{k=1}^d \partial_{k}\bm{A}_{ik}(U) = 0,
	\end{equation}
	where $U\in \bbr^{m}$ is the state and $\bm{A}\in\bbr^{m\times d}$ is the flux. Indeed, the compressible Euler equations \eqref{CSE}$_{1,2}$ can be written in the form of \eqref{conservation} with $m=3$, $d=2$, $U=(\rho,P):=(\rho,\rho u)$ and
	\[\bm{A}(U):=\frac{1}{\rho}\begin{pmatrix}\rho P_1 &\rho P_2\\ P_1^2 +\frac{\gamma-1}{\gamma}\rho^{\gamma+1} & P_1P_2\\ P_2P_1& P_2^2+\frac{\gamma-1}{\gamma}\rho^{\gamma+1}\end{pmatrix}=\begin{pmatrix}
	\rho u^\top\\ \rho u\otimes u +\frac{\gamma-1}{\gamma}\rho^\gamma I_2
	\end{pmatrix}.\]
	For the compressible Euler equation, one usually consider the following natural (mathematical) entropy $\eta(U)$ defined as
	\[\eta(U):=\frac{|P|^2}{2\rho}+\frac{\rho^\gamma}{\gamma}=\frac{\rho|u|^2}{2}+\frac{\rho^\gamma}{\gamma}.\]
	For any function $V = (n,nv)$, we define the relative entropy $\eta(V|U)$ and the relative flux $\bm{A}(V|U)$ as follows:
	\begin{align*}
	&\eta(V|U) := \eta(V)-\eta(U) -D\eta(U)\cdot(V-U),\\
	&\bm{A}(V|U) := \bm{A}(V) - \bm{A}(U) -D\bm{A}(U)\cdot (V-U),
	\end{align*}
	where $D\bm{A}(U)\cdot (V-U)$ is a $3\times 2$ matrix defined as
	\[[D\bm{A}(U)\cdot(V-U)]_{ij}:=\sum_{k=1}^{3}\partial_{U_k}\bm{A}_{ij}(U)(V_k-U_k).\]
	By direct computation, one can easily compute $D\eta$ as
	\[D\eta = \begin{pmatrix}D_\rho \eta\\ D_P\eta\end{pmatrix}=\begin{pmatrix} -\frac{|P|^2}{2\rho^2}+\rho^{\gamma-1}\\ \frac{P}{\rho}\end{pmatrix}=\begin{pmatrix}
	-\frac{|u|^2}{2} +\rho^{\gamma-1}\\ u
	\end{pmatrix}\]
	and therefore, the relative entropy becomes
	\begin{align}
	\begin{aligned}\label{rel-ent-euler}
	\eta(V|U) &= \frac{n|v|^2}{2}+\frac{n^\gamma}{\gamma}-\frac{\rho|u|^2}{2}-\frac{\rho^{\gamma}}{\gamma}-\left(-\frac{|u|^2}{2}+\rho^{\gamma-1}\right)(n-\rho)-u\cdot(nv-\rho u)\\
	&=\frac{n|u-v|^2}{2} + \frac{1}{\gamma}(n^\gamma-\rho^\gamma-\gamma\rho^{\gamma-1}(n-\rho))=\frac{n|u-v|^2}{2}+\frac{p(n|\rho)}{\gamma-1}.
	\end{aligned}
	\end{align}
	Moreover, using the definition of $D\bm{A}(U)\cdot(V-U)$, it is straightforward to observe that
	\[D\bm{A}(U)\cdot(V-U) = \begin{pmatrix}
	(nv-\rho u)^\top\\
	-(n-\rho)(u\otimes u)+u\otimes (nv-\rho u)+(nv-\rho u)\otimes u +(\gamma-1) \rho^{\gamma-1}(n-\rho)I_2
	\end{pmatrix}.\] 
	Therefore, the relative flux $\bm{A}(V|U)$ is computed as
	\begin{align*}
	\bm{A}(V|U) = \begin{pmatrix}
	0\\
	n(v-u)\otimes(v-u)+p(n|\rho)I_2
	\end{pmatrix}.
	\end{align*}
	The hydrodynamic limit towards the Euler-type system is based on the following proposition, which is the cornerstone of the relative entropy method.
	
	\begin{proposition}\cite{D79}\label{prop:rel-ent}
		Let $U$ be a smooth solution to the conservation law \eqref{conservation}, and let $V$ be any function. Then, the following estimate on the relative entropy holds:
		\begin{align*}
		\frac{\d}{\d t}\int_{\bbr^d} \eta(V|U)\,\d x&=\frac{\d}{\d t}\int_{\bbr^d}\eta(V)\,\d x - \int_{\bbr^d}\nabla_x(D\eta(U)):\bm{A}(V|U)\,\d x\\
		&\quad-\int_{\bbr^d}D\eta(U)\cdot\left(\pa_t V+\nabla_x\cdot \bm{A}(V)\right)\,\d x. 
		\end{align*}
	\end{proposition}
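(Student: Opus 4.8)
The plan is to prove this (the classical Dafermos \cite{D79} relative entropy identity) by a direct time differentiation of the algebraic definition $\eta(V|U) = \eta(V) - \eta(U) - D\eta(U)\cdot(V-U)$, using only that $U$ solves \eqref{conservation}, the chain rule, and the fact that $\eta$ is a genuine convex entropy: each matrix $D^2\eta(U)D_U\bm{A}_{\cdot k}(U)$ (where $D_U\bm{A}_{\cdot k}(U)$ is the $m\times m$ Jacobian of $U\mapsto(\bm{A}_{1k}(U),\dots,\bm{A}_{mk}(U))$) is symmetric, equivalently an entropy flux $\bm{q}$ exists with $\pa_t\eta(U) + \nabla_x\cdot\bm{q}(U) = 0$ along smooth solutions, so that $\frac{\d}{\d t}\int\eta(U)\,\d x = 0$. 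I also assume the boundary contributions in the spatial integrations by parts vanish (periodicity on $\bbt^2$, sufficient decay on $\bbr^2$), as is implicit in the statement.

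First I would differentiate in $t$ and expand: $\pa_t\eta(V|U) = \pa_t\eta(V) - \pa_t\eta(U) - \pa_t[D\eta(U)]\cdot(V-U) - D\eta(U)\cdot\pa_t V + D\eta(U)\cdot\pa_t U$; since $\pa_t\eta(U) = D\eta(U)\cdot\pa_t U$ by the chain rule, the second and last terms cancel, leaving $\pa_t\eta(V|U) = \pa_t\eta(V) - \pa_t[D\eta(U)]\cdot(V-U) - D\eta(U)\cdot\pa_t V$. The key step is to rewrite $\pa_t[D\eta(U)]\cdot(V-U)$ in spatial-divergence form. Writing $\pa_t[D\eta(U)] = D^2\eta(U)\pa_t U$ and using the equation as $\pa_t U = -\sum_k D_U\bm{A}_{\cdot k}(U)\pa_k U$, together with $\pa_k[D\eta(U)] = D^2\eta(U)\pa_k U$ and the symmetry of $D^2\eta(U)$ and of $D^2\eta(U)D_U\bm{A}_{\cdot k}(U)$, I expect to move the Hessian across the inner product and reach $\pa_t[D\eta(U)]\cdot(V-U) = -\sum_k\pa_k[D\eta(U)]\cdot\big(D_U\bm{A}_{\cdot k}(U)(V-U)\big) = -\nabla_x(D\eta(U)):\big(D\bm{A}(U)\cdot(V-U)\big)$, where $\big[D\bm{A}(U)\cdot(V-U)\big]_{lk} = \sum_q\pa_{U_q}\bm{A}_{lk}(U)(V_q - U_q)$ is exactly the matrix appearing in the definition of $\bm{A}(V|U)$.

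With this in hand I would substitute $D\bm{A}(U)\cdot(V-U) = \bm{A}(V) - \bm{A}(U) - \bm{A}(V|U)$, integrate over $x$, and integrate by parts: the $\bm{A}(V)$ contribution becomes $\int D\eta(U)\cdot(\nabla_x\cdot\bm{A}(V))\,\d x$, the $\bm{A}(V|U)$ contribution stays as $-\int\nabla_x(D\eta(U)):\bm{A}(V|U)\,\d x$, and the $\bm{A}(U)$ contribution equals $\int\nabla_x(D\eta(U)):\bm{A}(U)\,\d x = -\int D\eta(U)\cdot(\nabla_x\cdot\bm{A}(U))\,\d x = \int D\eta(U)\cdot\pa_t U\,\d x = \int\pa_t\eta(U)\,\d x = 0$. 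Reassembling these in the integrated version of the reduced identity from the previous paragraph yields exactly $\frac{\d}{\d t}\int\eta(V|U)\,\d x = \frac{\d}{\d t}\int\eta(V)\,\d x - \int\nabla_x(D\eta(U)):\bm{A}(V|U)\,\d x - \int D\eta(U)\cdot(\pa_t V + \nabla_x\cdot\bm{A}(V))\,\d x$.

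I expect the main obstacle to be the purely algebraic identity $\pa_t[D\eta(U)]\cdot(V-U) = -\nabla_x(D\eta(U)):(D\bm{A}(U)\cdot(V-U))$: this is where the entropy structure genuinely enters (the symmetry of $D^2\eta(U)D_U\bm{A}_{\cdot k}(U)$), and the index bookkeeping must be done carefully. Everything else — the two cancellations and the integrations by parts — is routine. If one prefers to avoid the abstract entropy identity, this step can instead be checked by hand for the explicit Euler entropy $\eta(U) = \frac{|P|^2}{2\rho} + \frac{\rho^\gamma}{\gamma}$ and the flux $\bm{A}$ displayed above, which is the only case needed in the sequel.
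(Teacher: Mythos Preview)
Your argument is correct: the time differentiation, the cancellation $\pa_t\eta(U)=D\eta(U)\cdot\pa_t U$, the key algebraic identity $\pa_t[D\eta(U)]\cdot(V-U)=-\nabla_x(D\eta(U)):(D\bm{A}(U)\cdot(V-U))$ via the symmetry of $D^2\eta(U)D_U\bm{A}_{\cdot k}(U)$, and the subsequent integration by parts all go through as you describe, yielding exactly the stated identity. Note that the paper does not supply its own proof of this proposition --- it simply cites Dafermos \cite{D79} --- so there is no in-paper argument to compare against; your write-up is a faithful reconstruction of the classical derivation, and the additional remark that one may alternatively verify the key step by hand for the explicit Euler entropy is a helpful observation for readers uneasy with the abstract entropy compatibility condition.
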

	
	\subsection{Relation between relative entropy and the modulated energy}
	In this part, we first show that the energy function $\mathcal{E}^\e$ can be represented in terms of the hydrodynamic variables $(\rho^\e,\rho^\e u^\e)$. Then, we introduce the modulated energy, which is used to show the hydrodynamic limit of the Schr\"odinger-type equations, and show that it can be understood as a perturbation of the relative entropy \eqref{rel-ent-euler} of the compressible Euler equation. First, the energy function can be expanded as
	\begin{align*}
	\mathcal{E}^\e(t,x) = \frac{1}{2}|\e D^\e\psi^\e|^2 +V(|\psi^\e|^2) =\frac{1}{2}(\e\nabla \psi^\e +\i A^\e\psi^\e)\cdot(\e\nabla\overline{\psi^\e}-\i A^\e\overline{\psi^\e})+V(\rho^\e).
	\end{align*}
	We also note that
	\begin{align*}
	(\e\nabla \psi^\e +\i A^\e\psi^\e)\cdot(\e\nabla\overline{\psi^\e}-\i A^\e\overline{\psi^\e}) = \e^2\nabla\psi^\e\cdot\nabla\overline{\psi^\e}+\i\e A^\e\cdot(\psi^\e\nabla \overline{\psi^\e}-\overline{\psi^\e}\nabla{\psi^\e})+|A^\e|^2|\psi^\e|^2.
	\end{align*}
	On the other hand, it follows from the definition $u^\e=\nabla S^\e+A^\e$ that 
	\[\rho^\e|u^\e|^2 = \rho^\e\left|\frac{\i\e}{2\rho^\e}(\psi^\e\nabla\overline{\psi^\e}-\overline{\psi^\e}\nabla\psi^\e)+A^\e\right|^2 = \frac{\e^2}{4\rho^\e}|\psi^\e\nabla\overline{\psi^\e}-\overline{\psi^\e}\nabla\psi^\e|^2+\i\e(\psi^\e\nabla\overline{\psi^\e}-\overline{\psi^\e}\nabla\psi^\e)\cdot A^\e+\rho^\e|A^\e|^2.\]
	Therefore, one has
	\begin{align*}
	(\e\nabla &\psi^\e +\i A^\e\psi^\e)\cdot(\e\nabla\overline{\psi^\e}-\i A^\e\overline{\psi^\e}) \\
	&= \rho^\e|u^\e|^2+\e^2\nabla\psi^\e\cdot\nabla \overline{\psi^\e}-\frac{\e^2}{4\rho^\e}|\psi^\e\nabla\overline{\psi^\e}-\overline{\psi^\e}\nabla\psi^\e|^2\\
	&=\rho^\e|u^\e|^2+\frac{\e^2}{4\rho^\e}|\psi^\e\nabla\overline{\psi^\e}+\overline{\psi^\e}\nabla\psi^\e|^2 = \rho^\e|u^\e|^2 +\frac{\e^2}{4\rho^\e}|\nabla\rho^\e|^2 = \rho^\e|u^\e|^2+\e^2|\nabla\sqrt{\rho^\e}|^2.
	\end{align*}
	We substitute the above estimate to the energy function to write the energy function in terms of the hydrodynamic quantities:
	\begin{align}
	\begin{aligned}\label{E-hydro}
	\mathcal{E}^\e&=\frac{1}{2}\rho^\e|u^\e|^2+V(\rho^\e)+\frac{1}{2}\e^2|\nabla\sqrt{\rho^\e}|^2= \frac{1}{2}\rho^\e |u^\e|^2 +\frac{(\rho^\e)^{\gamma}}{\gamma}+\frac{\e^2}{2}|\nabla\sqrt{\rho^\e}|^2\\
	&=\eta(U^\e)+\frac{\e^2}{2}|\nabla \sqrt{\rho^\e}|^2,
	\end{aligned}
	\end{align}
	where $U^\e:=(\rho^\e,\rho^\e u^\e)$. Therefore, one can observe that the energy function of the SCS equations can be understood as a modified entropy with the additional term $\frac{\e^2}{2}|\nabla \sqrt{\rho^\e}|^2$. We interpret this additional term as a quantum correction.\\
	
	\noindent On the other hand, considering the modulated energy as in the Schr\"odinger-type equations \cite{LL08,LW12,LZ06}, the natural modulated energy for the SCS equations is 
	\begin{align*}
	\mathcal{H}^\e(t)&:=\int_{\Omega}\frac{1}{2}|(\e D^\e-\i u)\psi^\e|^2+\frac{p(\rho^\e|\rho)}{\gamma-1}\,\d x\\
	&=\int_{\Omega}\frac{1}{2}|(\e D^\e-\i u)\psi^\e|^2+\frac{(\rho^\e)^\gamma-\rho^\gamma-\gamma\rho^{\gamma-1}(\rho^\e-\rho)}{\gamma}\,\d x.
	\end{align*}
	In what follows, we show that the modulated energy can also be represented in terms of the hydrodynamic variables. We first expand the modulated energy as
	\begin{align*}
	\mathcal{H}^\e&=\int_{\Omega}\frac{\e^2}{2}|D^\e\psi^\e|^2+\frac{p(\rho^\e|\rho)}{\gamma-1}\,\d x +\frac{\e\i}{2}\int_{\Omega}(D^\e\psi^\e\cdot u \overline{\psi^\e}-\overline{D^\e\psi^\e}\cdot  u\psi^\e) \d x+\frac{1}{2}\int_{\Omega}|u|^2|\psi^\e|^2\,\d x\\
	&=\int_{\Omega}\mathcal{E}^\e(t,x)\,\d x+\int_{\Omega}\rho^{\gamma-1}\left(\frac{\gamma-1}{\gamma}\rho-\rho^{\e}\right)\,\d x\\
	&\quad +\frac{\e\i}{2}\int_{\Omega}\left(\overline{\psi^\e}\nabla\psi^\e +\frac{\i}{\e}A^\e|\psi^\e|^2-\psi^\e\nabla\overline{\psi^\e}+\frac{\i}{\e}A^\e|\psi^\e|^2\right)\cdot u +\frac{1}{2}\int_{\Omega}\rho^\e|u|^2\,\d x.
	\end{align*}
	However, since
	\begin{align*}
	\frac{\e\i}{2}\left(\overline{\psi^\e}\nabla\psi^\e +\frac{\i}{\e}A^\e|\psi^\e|^2-\psi^\e\nabla\overline{\psi^\e}+\frac{\i}{\e}A^\e|\psi^\e|^2\right) = \frac{\e\i}{2}\left(\overline{\psi^\e}\nabla\psi^\e -\psi^\e\nabla\overline{\psi^\e}\right)-A^\e|\psi^\e|^2=-\rho^\e u^\e,
	\end{align*}
	the modulated energy becomes
	\[\mathcal{H}^\e = \int_{\Omega}\mathcal{E}^\e(t,x)\,\d x+\int_{\Omega}\rho^{\gamma-1}\left(\frac{\gamma-1}{\gamma}\rho-\rho^\e\right)\,\d x-\int_{\Omega}\rho^\e u^\e\cdot u\,\d x+\frac{1}{2}\int_{\Omega}\rho^\e|u|^2\,\d x.\]
	Considering the hydrodynamic formulation \eqref{E-hydro} for $\mathcal{E}^\e$, the modulated energy is related to classical relative entropy with the quantum correction term:
	\begin{align}
		\begin{aligned}\label{ME-RE}
			\mathcal{H}^\e&=\int_{\Omega}\frac{\rho^\e|u^\e-u|^2}{2}+\frac{p(\rho^\e|\rho)}{\gamma-1}\,\d x+\frac{\e^2}{2}\int_{\Omega}|\nabla\sqrt{\rho^\e}|^2\,\d x\\
			&=\int_{\Omega}\eta(U^\e|U)\,\d x+\frac{\e^2}{2}\int_{\Omega}|\nabla \sqrt{\rho^\e}|^2\,\d x.
		\end{aligned}
	\end{align}
	Therefore, the relation \eqref{ME-RE} reveals that the theory of relative entropy method developed in the classical hydrodynamic limit problem can also be applied to the hydrodynamic limit problem of the Schr\"odinger-type equations. In the following section, we present the estimate of the modulated energy $\mathcal{H}^\e$, with the help of the classical relative entropy estimate.
	
	\section{Hydrodynamic limit to the compressible Euler-Chern-Simons system}\label{sec:4}
	\setcounter{equation}{0}
	In this section, we provide the proof of Theorem \ref{thm:main-1}. We first estimate the modulated energy function to show that it disappears as $\e$ converges to 0. Then, using the decay estimate of the modulated energy, we show that the desired convergences are attained. 
	
	\subsection{Modulated energy estimate}
	We start by deriving the estimate of the modulated energy. The main idea is to use the relation between the modulated energy and the relative entropy, and the well-known classical estimate on the relative entropy in Proposition \ref{prop:rel-ent}.
	
	\begin{proposition}\label{P4.1}
		Let $(\psi^\e,A^\e_0,A^\e)$ be the unique global solution to the SCS equations \eqref{CSS-e} subject to the initial data $\psi^\e_{\textup{in}}\in H^2(\Omega)$ and  $A^\e_{\textup{in}}$ satisfying \eqref{compatibility} and let $(\rho,u)$ be the unique local-in-time smooth solution to the compressible Euler equations \eqref{CSE}$_{1,2}$ subject to the initial data $(\rho_{\textup{in}},u_{\textup{in}})$. Suppose that the initial data satisfy the well-prepared condition \eqref{well-prepared}. Then,
		\begin{equation}\label{est-modulated}
		\mathcal{H}^\e(t)\le C\e^{\min\{\lambda,2\}},\quad 0\le t<T_*.
		\end{equation}
	\end{proposition}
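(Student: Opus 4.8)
The plan is to apply the relative entropy identity of Proposition \ref{prop:rel-ent} to $U = U^\e := (\rho^\e,\rho^\e u^\e)$ against the smooth solution $U=(\rho,\rho u)$ of the compressible Euler system \eqref{CSE}$_{1,2}$, and then carry the extra quantum correction term $\frac{\e^2}{2}\int_\Omega|\nabla\sqrt{\rho^\e}|^2\,\d x$ along via the decomposition \eqref{ME-RE}. First I would recall from \eqref{ME-RE} that $\mathcal{H}^\e(t) = \int_\Omega \eta(U^\e|U)\,\d x + \frac{\e^2}{2}\int_\Omega|\nabla\sqrt{\rho^\e}|^2\,\d x$, and from \eqref{E-hydro} and the energy conservation in Proposition \ref{P2.1} that $\int_\Omega\eta(U^\e)\,\d x + \frac{\e^2}{2}\int_\Omega|\nabla\sqrt{\rho^\e}|^2\,\d x = \int_\Omega\mathcal{E}^\e\,\d x$ is constant in time. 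Hence in Proposition \ref{prop:rel-ent}, applied with $V=U^\e$, the term $\frac{\d}{\d t}\int\eta(U^\e)\,\d x$ exactly cancels the time-derivative of the quantum correction up to the fact that $U^\e$ does \emph{not} solve \eqref{conservation} but rather \eqref{CSS-hydro-e}; the defect $\pa_t U^\e + \nabla_x\cdot\bm A(U^\e)$ is precisely the Bohm potential term $\big(0,\ \rho^\e\frac{\e^2}{2}\nabla(\Delta\sqrt{\rho^\e}/\sqrt{\rho^\e})\big)$.

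The key computation is therefore to write
\begin{align*}
	\frac{\d}{\d t}\mathcal{H}^\e(t) &= -\int_\Omega \nabla_x(D\eta(U)):\bm A(U^\e|U)\,\d x - \int_\Omega D\eta(U)\cdot\Big(\pa_t U^\e + \nabla_x\cdot\bm A(U^\e)\Big)\,\d x \\
	&\qquad + \frac{\e^2}{2}\frac{\d}{\d t}\int_\Omega|\nabla\sqrt{\rho^\e}|^2\,\d x + \frac{\d}{\d t}\int_\Omega\eta(U^\e)\,\d x,
\end{align*}
where the last two terms sum to zero by Proposition \ref{P2.1} and \eqref{E-hydro}. For the first term, using the explicit form $\bm A(U^\e|U) = \big(0,\ \rho^\e(u^\e-u)\otimes(u^\e-u) + p(\rho^\e|\rho)I_2\big)$ computed in the excerpt, and that $D\eta(U) = (-\tfrac12|u|^2+\rho^{\gamma-1},\ u)$ is Lipschitz on the range of the smooth solution (which is bounded away from vacuum and bounded above on $[0,T_*)$), one bounds $|\nabla_x(D\eta(U)):\bm A(U^\e|U)| \le \|\nabla u\|_{L^\infty}\big(\rho^\e|u^\e-u|^2 + p(\rho^\e|\rho)\big)$, which is controlled by $C\,\eta(U^\e|U) \le C\,\mathcal{H}^\e(t)$ (here the hypotheses $\gamma\ge2$ on $\bbr^2$ or $\gamma>1$ on $\bbt^2$ are what make $p(\rho^\e|\rho)$ comparable to the relative entropy and $L^1$-integrable). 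The second, ``flux defect'' term is $-\int_\Omega u\cdot\rho^\e\frac{\e^2}{2}\nabla(\Delta\sqrt{\rho^\e}/\sqrt{\rho^\e})\,\d x$; integrating by parts and using the Bohm-potential identity $\rho^\e\nabla(\Delta\sqrt{\rho^\e}/\sqrt{\rho^\e}) = \frac{1}{2}\nabla\Delta\rho^\e - 2\,\nabla\cdot(\nabla\sqrt{\rho^\e}\otimes\nabla\sqrt{\rho^\e})$ (or, equivalently, $\tfrac14\nabla\cdot(\rho^\e\nabla^2\log\rho^\e)$), this term is rewritten as $\e^2$ times an expression involving $\nabla\sqrt{\rho^\e}\otimes\nabla\sqrt{\rho^\e}$ and $\Delta\rho^\e$ paired against two derivatives of the smooth $u$; the pieces not absorbed into $\mathcal{H}^\e$ are $\mathcal{O}(\e^2)$ after using $\frac{\e^2}{2}\|\nabla\sqrt{\rho^\e}\|_{L^2}^2 \le \mathcal{H}^\e$ plus the conservation of total charge $\|\rho^\e\|_{L^1}=1$. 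Collecting everything gives a Grönwall inequality
\[
	\frac{\d}{\d t}\mathcal{H}^\e(t) \le C\,\mathcal{H}^\e(t) + C\e^2,
\]
and since the well-prepared condition \eqref{well-prepared} gives $\mathcal{H}^\e(0) = \mathcal{O}(\e^\lambda)$, Grönwall's lemma yields $\mathcal{H}^\e(t) \le C\e^{\min\{\lambda,2\}}$ on $[0,T_*)$.

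The main obstacle I anticipate is the rigorous handling of the Bohm-potential (flux defect) term: one must integrate by parts twice and express $\rho^\e\nabla(\Delta\sqrt{\rho^\e}/\sqrt{\rho^\e})$ in a form where every surviving term carries either a prefactor $\e^2$ multiplying a quantity already controlled by $\mathcal{H}^\e$ (such as $\e^2\|\nabla\sqrt{\rho^\e}\|_{L^2}^2$ or $\|\rho^\e-\rho\|$-type quantities) or a prefactor $\e^2$ multiplying the conserved mass, so that no uncontrolled higher Sobolev norm of $\psi^\e$ (only $H^2$ regularity is available) is needed. A secondary technical point is justifying the integrations by parts and the vanishing of boundary terms in the whole-space case $\Omega=\bbr^2$, which is where the stronger restriction $\gamma\ge2$ enters, ensuring enough integrability/decay of $p(\rho^\e|\rho)$ and of the fluxes at infinity. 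Once the differential inequality is established, the conclusion is immediate from Grönwall and \eqref{well-prepared}.
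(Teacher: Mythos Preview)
Your proposal is correct and follows essentially the same route as the paper: apply Proposition~\ref{prop:rel-ent} with $V=U^\e$, use \eqref{ME-RE} and the energy conservation \eqref{E-hydro} to cancel $\frac{\d}{\d t}\int\eta(U^\e)\,\d x$ against the quantum correction, bound the relative-flux term by $C\int\eta(U^\e|U)\,\d x$, and control the Bohm-potential defect by $C\e^2\|\nabla\sqrt{\rho^\e}\|_{L^2}^2+C\e^2$ before closing with Gr\"onwall. The only cosmetic difference is that the paper handles $\mathcal{I}_{12}$ by a first integration by parts $-\frac{\e^2}{2}\int\rho^\e u\cdot\nabla(\Delta\sqrt{\rho^\e}/\sqrt{\rho^\e})\,\d x=\frac{\e^2}{2}\int\nabla\cdot(\rho^\e u)\,\tfrac{\Delta\sqrt{\rho^\e}}{\sqrt{\rho^\e}}\,\d x$ and then expands, rather than invoking the divergence-form Bohm identity you state; both computations land on the same quadratic terms in $\nabla\sqrt{\rho^\e}$ paired against $\nabla u$ and $\nabla\nabla\cdot u$.
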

	\begin{proof}
		We use the relation \eqref{ME-RE} between the modulated energy $\mathcal{H}^\e$ and the relative entropy $\eta(U^\e|U)$, Proposition \ref{prop:rel-ent}, and the representation \eqref{E-hydro} of the energy to derive
		\begin{align*}
		\frac{\d \mathcal{H}^\e}{\d t} &= \frac{\d}{\d t}\int_{\Omega}\eta(U^\e|U)\,\d x+\frac{\e^2}{2}\frac{\d}{\d t}\int_{\Omega}|\nabla\sqrt{\rho^\e}|^2\,\d x\\
		&=\frac{\d}{\d t}\int_{\Omega}\eta(U^\e)\,\d x -\int_{\Omega}\nabla_x(D\eta(U)):\bm{A}(U^\e|U)\,\d x-\int_{\Omega}D\eta(U)\cdot(\pa_t U^\e+\nabla_x\cdot \bm{A}(U^\e))\,\d x\\
		&\quad +\frac{\e^2}{2}\frac{\d}{\d t}\int_{\Omega}|\nabla\sqrt{\rho^\e}|^2\,\d x\\
		&=\frac{\d}{\d t}\int_{\Omega}\mathcal{E}^\e(t,x)\,\d x-\int_{\Omega}\nabla_x(D\eta(U)):\bm{A}(U^\e|U)\,\d x-\int_{\Omega}D\eta(U)\cdot(\pa_t U^\e+\nabla_x\cdot \bm{A}(U^\e))\,\d x\\
		&=-\int_{\Omega}\nabla_x(D\eta(U)):\bm{A}(U^\e|U)\,\d x-\int_{\Omega}D\eta(U)\cdot(\pa_t U^\e+\nabla_x\cdot \bm{A}(U^\e))\,\d x\\
		&=:\mathcal{I}_{11}+\mathcal{I}_{12},
		\end{align*}
		where we used the conservation of the total energy. In the following, we estimate each term $\mathcal{I}_{1i}$ separately.\\
		
		\noindent $\bullet$ (Estimate of $\mathcal{I}_{11}$): We first recall that the derivative of the entropy $D\eta$ and the relative flux $A(U^\e|U)$ are given as
		\[\nabla_x(D\eta(U)) = \nabla_x\begin{pmatrix}
			-\frac{|u|^2}{2}+\rho^{\gamma-1}\\u
		\end{pmatrix},\quad \bm{A}(U^\e|U) = \begin{pmatrix}
		0\\ \rho^\e(u^\e-u)\otimes (u^\e-u)+p(\rho^\e|\rho)I_2
		\end{pmatrix}.\]
		Since $u$ is smooth, $\|\nabla_x u\|_{L^\infty(0,T_*;L^\infty(\bbr^2))}$ is bounded. Therefore, we estimate $\mathcal{I}_{11}$ as
		\begin{align*}
		\mathcal{I}_{11}&=-\int_{\Omega}\nabla_x(D\eta(U)):\bm{A}(U^\e|U)\,\d x \le C\left(\int_{\Omega}\rho^\e|u^\e-u|^2\,\d x +\int_{\Omega}p(\rho^\e|\rho)\,\d x\right)\\
		&\le C\int_{\Omega}\eta(U^\e|U)\,\d x.
		\end{align*}

		\noindent $\bullet$ (Estimate of $\mathcal{I}_{12}$): To estimate $\mathcal{I}_{12}$, we first note that $U^\e$ satisfies \eqref{CSS-hydro-e}$_{1,2}$, and therefore, 
		\[\pa_t U^\e +\nabla_x\cdot \bm{A}(U^\e) = \begin{pmatrix}0\\ \rho^\e\frac{\e^2}{2}\nabla\left(\frac{\Delta \sqrt{\rho^\e}}{\sqrt{\rho^\e}}\right)\end{pmatrix}.\]
		Hence, the integrand of $\mathcal{I}_{12}$ can be computed as 
		\[D\eta(U)\cdot (\pa_t U^\e+\nabla_x\cdot \bm{A}(U^\e)) = \frac{\e^2}{2}\rho^\e u \cdot \nabla \left(\frac{\Delta \sqrt{\rho^\e}}{\sqrt{\rho^\e}}\right).\] 
		Therefore, we use the integration-by-part to estimate $\mathcal{I}_{12}$ as 
		\begin{align*}
		\mathcal{I}_{12} &= -\int_{\Omega}D\eta(U)\cdot(\pa_tU^\e+\nabla_x\cdot \bm{A}(U^\e))\,\d x = -\frac{\e^2}{2}\int_{\Omega}\rho^\e u\cdot\nabla\left(\frac{\Delta \sqrt{\rho^\e}}{\sqrt{\rho^\e}}\right)\,\d x\\
		&=\frac{\e^2}{2}\int_{\Omega}\nabla\cdot(\rho^\e u)\frac{\Delta \sqrt{\rho^\e}}{\sqrt{\rho^\e}}\,\d x = \frac{\e^2}{2}\int_{\Omega}(\nabla\cdot u)\sqrt{\rho^\e}\Delta \sqrt{\rho^\e}\,\d x +\frac{\e^2}{2}\int_{\Omega}u\cdot\nabla\rho^\e \frac{\Delta \sqrt{\rho^\e}}{\sqrt{\rho^\e}}\,\d x\\
		&= \frac{\e^2}{2}\int_{\Omega}(\nabla\cdot u)\sqrt{\rho^\e}\Delta \sqrt{\rho^\e}\,\d x +\e^2\int_{\Omega}u\cdot\nabla\sqrt{\rho^\e} \Delta \sqrt{\rho^\e}\,\d x=:\mathcal{I}_{121}+\mathcal{I}_{122}.
		\end{align*}
		
		\noindent $\diamond$ (Estimate of $\mathcal{I}_{121}$): We use integrate-by-parts once more and use the boundedness of $\nabla\cdot u$ and $\nabla\nabla\cdot u$ to estimate $\mathcal{I}_{121}$ as
		\begin{align*}
		\mathcal{I}_{121}&= \frac{\e^2}{2} \left(-\int_{\Omega}(\nabla\nabla\cdot u)\cdot \nabla\sqrt{\rho^\e} \sqrt{\rho^\e}\,\d x-\int_{\Omega}(\nabla\cdot u)|\nabla\sqrt{\rho^\e}|^2\,\d x\right)\\
		&\le C\e^2\left(\left(\int_{\Omega}|\nabla\sqrt{\rho^\e}|^2\right)^{\frac{1}{2}}\left(\int_{\Omega}\rho^\e\,\d x\right)^{\frac{1}{2}}+\int_{\Omega}|\nabla\sqrt{\rho^\e}|^2\,\d x\right)\\
		&\le C\e^2\left(\int_{\Omega}|\nabla\sqrt{\rho^\e}|^2\,\d x+1\right),
		\end{align*}
		where we use the Cauchy-Schwartz inequality and the conservation of the mass $\int_{\Omega}\rho^\e\,\d x = 1$.\\
		
		\noindent $\diamond$ (Estimate of $\mathcal{I}_{122}$): For $\mathcal{I}_{122}$, we first note that
		\begin{align*}
		\int_{\Omega}u\cdot \nabla\sqrt{\rho^\e}\Delta\sqrt{\rho^\e}\,\d x &= \sum_{i,j=1}^2 \int_{\Omega}u_i \pa_i\sqrt{\rho^\e}\pa_j^2\sqrt{\rho^\e}\,\d x \\
		&= -\sum_{i,j=1}^2\int_{\Omega}\pa_ju_i\pa_i \sqrt{\rho^\e}\pa_j\sqrt{\rho^\e}\,\d x-\sum_{i,j=1}^2 \int_{\Omega} u_i \pa_i\pa_j\sqrt{\rho^\e}\pa_j\sqrt{\rho^\e}\,\d x\\
		&=-\int_{\Omega}\nabla\sqrt{\rho^\e}\cdot \nabla u \cdot \nabla\sqrt{\rho^\e}\,\d x -\frac{1}{2}\int_{\Omega} u\cdot \nabla(|\nabla \sqrt{\rho^\e}|^2)\,\d x\\
		&=-\int_{\Omega}\nabla\sqrt{\rho^\e}\cdot \nabla u\cdot \nabla \sqrt{\rho^\e}\,\d x +\frac{1}{2}\int_{\Omega}(\nabla\cdot u)|\nabla \sqrt{\rho^\e}|^2\,\d x\\
		&\le C\int_{\Omega}|\nabla\sqrt{\rho^\e}|^2\,\d x.
		\end{align*}
		Therefore, we conclude that $\mathcal{I}_{122}\le C\e^2\int_{\Omega}|\nabla\sqrt{\rho^\e}|^2\,\d x$.\\
		
		\noindent We now combine the estimates of $\mathcal{I}_{121}$ and $\mathcal{I}_{122}$ to estimate $\mathcal{I}_{12}$ as
		\begin{align*}
		\mathcal{I}_{12}\le C\e^2\int_{\Omega}|\nabla\sqrt{\rho^\e}|^2\,\d x+C\e^2.
		\end{align*}
		Therefore, combining the estimates of $\mathcal{I}_{11}$ and $\mathcal{I}_{12}$, we conclude the estimate of the modulated energy as
		\begin{align*}
		\frac{\d \mathcal{H}^\e}{\d t}\le C\left(\int_{\Omega}\eta(U^\e|U)+\frac{\e^2}{2}\int_{\Omega}|\nabla\sqrt{\rho^\e}|^2\,\d x\right)+C\e^2\le C\mathcal{H}^\e+C\e^2.
		\end{align*}
		Then, Gr\"onwall inequality implies
		\[\mathcal{H}^\e(t) \le C(\mathcal{H}^\e(0)+\e^2)\le C\e^{\min\{\lambda,2\}},\]
		where the last inequality comes from the well-prepared initial data condition \eqref{well-prepared}.
	\end{proof}
	
	\subsection{Proof of Theorem \ref{thm:main-1}}\label{sec:4.2}
	We now present the asymptotic convergence of the SCS equations using the modulated energy estimates \eqref{est-modulated} in Proposition \ref{P4.1}. Although the convergence of $(\rho^\e,u^\e)$ can be obtained in the same way as in \cite{LW12}, we provide the full proof for the completeness of the paper. First, it is well-known (e.g. \cite{LM98}) that the $L^\gamma$-norm of $\rho^\e-\rho$ for can be controlled by the relative pressure $p(\rho^\e|\rho)$ as follows.
	\begin{lemma}\label{L4.1}
	Let $\gamma>1$ be a constant.
	\begin{enumerate}
		\item If $\gamma\ge 2$,
	\[|\rho^\e-\rho|^\gamma \le(\rho^\e)^\gamma-\rho^\gamma-\gamma\rho^{\gamma-1}(\rho^\e-\rho)=\frac{\gamma}{\gamma-1}p(\rho^\e|\rho).\]
		\item If $1<\gamma<2$,  
		\[\begin{cases}
		|\rho^\e-\rho|^2\le C\rho^{2-\gamma} p(\rho^\e|\rho),\quad &\mbox{if}\quad \rho^\e\le 2\rho,\\
		|\rho^\e-\rho|^\gamma\le Cp(\rho^\e|\rho)\quad &\mbox{if}\quad \rho^\e\ge 2\rho.
		\end{cases}\]
	\end{enumerate}
	\end{lemma}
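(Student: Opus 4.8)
The plan is to reduce every inequality in the statement to a one-variable estimate by rescaling. Set
\[
h(n,\rho):=n^\gamma-\rho^\gamma-\gamma\rho^{\gamma-1}(n-\rho),
\]
so that, by Taylor's formula with integral remainder for $f(s)=s^\gamma$ (with the remainder read as an improper integral if $n=0$),
\[
h(n,\rho)=\gamma(\gamma-1)\int_\rho^n(n-t)\,t^{\gamma-2}\,\d t=\frac{\gamma}{\gamma-1}\,p(n|\rho)\ge 0 .
\]
With $n=\rho^\e$, the claimed bounds are $|n-\rho|^\gamma\le h(n,\rho)$ in part (1), and $|n-\rho|^2\le C\rho^{2-\gamma}h(n,\rho)$ (resp. $|n-\rho|^\gamma\le C\,h(n,\rho)$) in part (2). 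In each of these the two sides are positively homogeneous of the same degree in $(n,\rho)$, so it suffices to treat $\rho=1$; writing $s:=n\ge 0$ and, abusing notation, $h(s):=s^\gamma-1-\gamma(s-1)$, the task becomes to compare $h(s)$ with $|s-1|^\gamma$ and with $|s-1|^2$.

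For part (1), with $\gamma\ge 2$, I would set $g(s):=h(s)-|s-1|^\gamma$, note $g(1)=0$, and determine the sign of $g'$ on each side of $s=1$. For $s>1$ one has $g'(s)=\gamma\bigl(s^{\gamma-1}-1-(s-1)^{\gamma-1}\bigr)\ge 0$ by superadditivity of $t\mapsto t^{\gamma-1}$ for exponent $\gamma-1\ge 1$ (applied with $1$ and $s-1$); for $0\le s<1$ one has $g'(s)=\gamma\bigl(s^{\gamma-1}+(1-s)^{\gamma-1}-1\bigr)\le 0$ since $s^{\gamma-1}\le s$ and $(1-s)^{\gamma-1}\le 1-s$. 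Thus $g$ decreases on $[0,1]$ and increases on $[1,\infty)$, so $g\ge g(1)=0$, which is exactly $|n-\rho|^\gamma\le h(n,\rho)=\frac{\gamma}{\gamma-1}p(n|\rho)$.

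For part (2), with $1<\gamma<2$, the argument is a compactness one. First $h(s)>0$ for all $s\ge 0$, $s\ne 1$: indeed $h(1)=0$ and $h'(s)=\gamma(s^{\gamma-1}-1)$ is negative on $(0,1)$ and positive on $(1,\infty)$, so $s=1$ is the strict global minimizer of $h$ on $[0,\infty)$. Moreover $h(s)/(s-1)^2\to\tfrac{\gamma(\gamma-1)}{2}>0$ as $s\to 1$, so $s\mapsto h(s)/(s-1)^2$ extends to a continuous, strictly positive function on the compact set $[0,2]$, hence is bounded below there by some $c_\gamma>0$; this gives $|s-1|^2\le c_\gamma^{-1}h(s)$, i.e. $|n-\rho|^2\le C\rho^{2-\gamma}p(n|\rho)$ when $n\le 2\rho$. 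On the other hand $s\mapsto h(s)/(s-1)^\gamma$ is continuous and positive on $[2,\infty)$ and tends to $1$ as $s\to\infty$, hence is bounded below there by a positive constant as well, giving $|s-1|^\gamma\le C\,h(s)$, i.e. $|n-\rho|^\gamma\le C\,p(n|\rho)$ when $n\ge 2\rho$.

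I expect the only genuinely delicate point to be the $1<\gamma<2$ case, and specifically the reason the statement must be split: the relative pressure $p(\rho^\e|\rho)$ is comparable to $|\rho^\e-\rho|^2$ near the reference state but only to the weaker quantity $|\rho^\e-\rho|^\gamma$ far from it, so no single scale-invariant bound can hold and one is forced to introduce both the dichotomy $\rho^\e\le 2\rho$ versus $\rho^\e\ge 2\rho$ and the degenerate weight $\rho^{2-\gamma}$ in the near regime. Carrying out the compactness arguments then amounts to verifying that the ratios $h(s)/(s-1)^2$ and $h(s)/(s-1)^\gamma$ stay bounded away from zero uniformly on the respective regimes, including at the removable singularity $s=1$ and in the limit $s\to\infty$; this is where all the quantitative content sits, though none of it is hard, and the whole statement is classical.
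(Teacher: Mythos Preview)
Your proof is correct. For part (1) you proceed exactly as the paper does: rescale to $\rho=1$, set $g(s)=h(s)-|s-1|^\gamma$, and show $g'\le 0$ on $(0,1)$ and $g'\ge 0$ on $(1,\infty)$; your justifications of the sign of $g'$ (superadditivity of $t\mapsto t^{\gamma-1}$ and the elementary bound $t^{\gamma-1}\le t$ on $[0,1]$) are a bit more explicit than the paper's, but the argument is the same.

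For part (2) the approaches diverge. The paper applies Taylor's theorem with Lagrange remainder directly: writing $g(x)=x^\gamma-1-\gamma(x-1)$ one has $g(x)=\tfrac{1}{2}g''(\xi)(x-1)^2=\tfrac{\gamma(\gamma-1)}{2}\xi^{\gamma-2}(x-1)^2$ for some $\xi$ between $1$ and $x$, and then bounds $\xi^{\gamma-2}$ from below explicitly in each regime (by $2^{\gamma-2}$ when $x\le 2$, and by $x^{\gamma-2}$ when $x\ge 2$, the latter combined with $(x-1)^{2-\gamma}\ge (1/2)^{2-\gamma}$). This yields the same constant $C=\frac{2^{2-\gamma}}{\gamma(\gamma-1)}$ in both subcases. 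Your route is instead a soft compactness argument: the ratios $h(s)/(s-1)^2$ on $[0,2]$ and $h(s)/(s-1)^\gamma$ on $[2,\infty)$ are continuous, strictly positive, and have positive limits at the degenerate points $s=1$ and $s=\infty$, hence are bounded below. Both are perfectly valid; the paper's version buys an explicit constant, while yours is slightly more robust in that it makes transparent exactly which qualitative features of $h$ are being used and would adapt without change to more general strictly convex pressure laws.
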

	\begin{proof} Although the estimates are well-known, we present the detailed proof for the completeness of the paper.\\
		
	\noindent(1) It suffice to show that for $x>0$,
		\[x^\gamma-1-\gamma(x-1)\ge |x-1|^\gamma.\]
		Once we obtain the above estimate, the desired estimate directly obtained by taking $x=\frac{\rho^\e}{\rho}$. We take $f(x):=x^\gamma-1-\gamma(x-1)-|x-1|^\gamma$. Then, $f(1) = 0$ and
		\[f'(x) = \begin{cases}
			\gamma (x^{\gamma-1}-1 - (x-1)^{\gamma-1})>0,\quad&\mbox{if}\quad  x>1,\\
			\gamma (x^{\gamma-1}-1 + (1-x)^{\gamma-1})<0,\quad&\mbox{if}\quad 0<x<1.
		\end{cases}\]
	Therefore, $f(x)$ has the minimum $f(1)=0$, which implies the desired estimate.\\
	
	\noindent (2) We first show that
	\[|\rho^\e-\rho|^2\le C\rho^{2-\gamma}p(\rho^\e|\rho),\quad\mbox{if}\quad \rho^\e\le 2\rho.\]
	By dividing $\rho^2$ on both sides, it is equivalent to show that
	\[(x-1)^2\le C \left(x^\gamma -1-\gamma \left(x-1\right)\right),\quad \mbox{if} \quad x\le 2.\]
	We define $g(x):=x^\gamma-1-\gamma(x-1)$. Then, it follows from the Taylor's theorem that
	\[g(x) = g(1) +g'(1)(x-1)+\frac{g''(\xi)}{2}(x-1)^2=\frac{g''(\xi)}{2}(x-1)^2,\quad \mbox{for some $\xi$ between 1 and $x$}.\]
	Since, $x\le 2$, we also have $\xi\le 2$, and therefore,
	\[\frac{g(x)}{(x-1)^2} = \frac{g''(\xi)}{2} = \gamma(\gamma-1) \xi^{\gamma-2} \ge \gamma(\gamma-1) 2^{\gamma-2}.\]
	Thus, the desired estimate holds with $C = \frac{2^{2-\gamma}}{\gamma(\gamma-1)}$. The second assertion is equivalent to
	\[(x-1)^\gamma \le C(x^\gamma-1-\gamma (x-1)),\quad x\ge 2.\]
	Similarly, we have
	\[\frac{g(x)}{(x-1)^\gamma} = \frac{g''(\xi)}{2}(x-1)^{2-\gamma}=\frac{\gamma(\gamma-1)}{2}\xi^{\gamma-2}(x-1)^{2-\gamma}\ge \frac{\gamma(\gamma-1)}{2}\left(\frac{x-1}{x}\right)^{2-\gamma}\ge \frac{\gamma(\gamma-1)}{2}\left(\frac{1}{2}\right)^{2-\gamma}.\]
	Therefore, we still have $(x-1)^\gamma \le C(x^\gamma-1-\gamma(x-1))$ with the same $C$.
	\end{proof}

	We now show the desired convergence, according to the spatial domain.\\
	
	\noindent $\bullet$ (Case 1: $\Omega =\bbr^2$) In this case, we assume that $\gamma\ge 2$. It follows from Lemma \ref{L4.1} (1) that $\|\rho^\e-\rho\|_{L^\gamma}$ is can be bounded by $\int_{\bbr^2}p(\rho^\e|\rho)\,\d x\le \mathcal{H}^\e$, which implies
	\[\rho^\e\to\rho\quad\mbox{\textup{in}}\quad L^\gamma(\bbr^2)\quad\mbox{as}\quad\e\to0.\] 
	For the convergence regarding $u^\e$, we use the H\"older inequality and 
	\[\|u\|_{L^\frac{2\gamma}{\gamma-1}}\le \|u\|_{H^1}\le C,\] 
	to obtain
	\begin{align*}
	\|\rho^\e u^\e-\rho u\|_{L^{\frac{2\gamma}{\gamma+1}}}&\le \|\rho^\e (u^\e-u)\|_{L^\frac{2\gamma}{\gamma+1}}+\|(\rho^\e-\rho)u\|_{L^\frac{2\gamma}{\gamma+1}}\\
	&\le \|\sqrt{\rho^\e}\|_{L^{2\gamma}}\|\sqrt{\rho^\e}|u^\e-u|\|_{L^2}+\|\rho^\e-\rho\|_{L^\gamma}\|u\|_{L^\frac{2\gamma}{\gamma-1}}\\
	&\le C\|\sqrt{\rho^\e}|u^\e-u|\|_{L^2}+C\|\rho^\e-\rho\|_{L^\gamma}\le C\mathcal{H}^\e\to0.
	\end{align*}
	Furthermore, since
	\[|\sqrt{\rho^\e}-\sqrt{\rho}|^2\le |\rho^\e-\rho|,\]
	we also observe
	\begin{align*}
	\|\sqrt{\rho^\e}u^\e-\sqrt{\rho}u\|_{L^2} &\le \|\sqrt{\rho^\e}|u^\e-u|\|_{L^2} +\|(\sqrt{\rho^\e}-\sqrt{\rho})|u|\|_{L^2}\\
	&\le \|\sqrt{\rho^\e}|u^\e-u|\|_{L^2}+\|u\|_{L^{\frac{2\gamma}{\gamma-1}}}\|\sqrt{\rho^{\e}}-\sqrt{\rho}\|_{L^{2\gamma}}\\
	&\le \mathcal{H}^\e +C\|\rho^\e-\rho\|_{L^\gamma}^\frac{1}{2}\rightarrow 0.
	\end{align*}
For the convergence of $A^{\e}_0$, we recall that the difference $A^{\e}_0-A_0$ satisfies
 \begin{align*}
 	\begin{aligned}
 		\Delta (A^\e_0-A_0)= \partial_1 (\rho^\e u^{\e}_2-\rho u_2)-\partial_2(\rho^{\e}u_1^{\e}-\rho u_1),
 	\end{aligned}
 \end{align*}
which can be represented by
\begin{align*}
	A_0^{\e}-A_0=\frac{1}{2\pi}\frac{x_1}{|x|^2}*(\rho^{\e}u_2^\e-\rho u_2)-\frac{1}{2\pi}\frac{x_2}{|x|^2}*(\rho^{\e}u_1^\e-\rho u_1).
\end{align*}
By the Hardy-Littlewood-Sobolev inequality and the Calder\'{o}n-Zygmund inequality, we have
	\[
	\|A_0^{\e}-A_0\|_{L^{2\gamma}}\leq \|\rho^\e u^{\e}-\rho u\|_{L^{\frac{2\gamma}{\gamma+1}}},\quad\mbox{and}\quad  \|\nabla(A_0^{\e}-A_0)\|_{L^{\frac{2\gamma}{\gamma+1}}}\leq \|\rho^{\e}u^{\e}-\rho u\|_{L^{\frac{2\gamma}{\gamma+1}}}.\]
These, together with the above estimate of $\rho^\e u^\e$, imply
\begin{align*}
	\begin{aligned}
	\|A_0^{\e}-A_0\|_{L^{2\gamma}}\rightarrow 0,\quad \|\nabla(A_0^{\e}-A_0)\|_{L^{\frac{2\gamma}{\gamma+1}}}\rightarrow 0,\quad \mbox{as} \quad \e\rightarrow 0.	
	\end{aligned}		
\end{align*}
Similarly, to obtain the convergence of $A^{\e}$, we recall that the difference $A^{\e}-A$ satisfies
\begin{align*}
	\begin{aligned}
		\Delta(A^\e-A) = (\nabla(\rho-\rho^\e))^\perp,
	\end{aligned}
\end{align*}
and apply the Hardy-Littlewood-Sobolev inequality, the H\"older inequality and the Calder\'on-Zygmund inequality as follows
\begin{align*}
	&\|A^{\e}-A\|_{L^{2\gamma}}\leq \|\rho^\e-\rho\|_{L^{\frac{2\gamma}{\gamma+1}}}\leq (\|\sqrt{\rho^{\e}}\|_{L^2}+\|\sqrt{\rho}\|_{L^2})\|\sqrt{\rho^{\e}}-\sqrt{\rho}\|_{L^{2\gamma}}\leq C\|\rho^{\e}-\rho\|_{L^{\gamma}},
	\\
	&\|\nabla(A^{\e}-A)\|_{L^{\gamma}}\leq \|\rho^{\e}-\rho\|_{L^{\gamma}}.
\end{align*}
Finally, the above inequalities yield
\begin{align*}
	\begin{aligned}
		\|A^{\e}-A\|_{L^{2\gamma}}\rightarrow 0,\quad \|\nabla(A^{\e}-A)\|_{L^{\gamma}}\rightarrow 0,\quad \mbox{as} \quad \e\rightarrow 0.	
	\end{aligned}		
\end{align*}

\noindent $\bullet$ (Case 2: $\Omega =\bbt^2$) In this case, we assume $\gamma>1$. Since the case when $\gamma\ge 2$ is exactly the same as before, we only focus on the case when $1<\gamma<2$. In this case, we use the boundedness $\|\rho\|_{L^\infty}$ to obtain
\begin{align*}
\int_{\bbt^2}|\rho^\e-\rho|^\gamma\,\d x&=\int_{\{\rho^\e\ge 2\rho\}}|\rho^\e-\rho|^\gamma\,\d x+\int_{\{\rho^\e\le 2\rho\}}|\rho^\e-\rho|^\gamma\,\d x\\
&\le C\int_{\bbt^2}p(\rho^\e|\rho)\,\d x+ \left(\int_{\{\rho^\e \le 2\rho\}}|\rho^\e-\rho|^2\,\d x\right)^{\frac{\gamma}{2}}\left(\int_{\{\rho^\e\le 2\rho\}}1\,\d x\right)^{\frac{2-\gamma}{2}}\\
&\le C\mathcal{H}^\e(t) +C\left(\mathcal{H}^\e(t)\right)^{\frac{\gamma}{2}}\to0.
\end{align*}
After obtaining the convergence of $\|\rho^\e-\rho\|_{L^\gamma}$ for all $\gamma>1$, the remaining procedure is the same as (Case 1). In particular, the representation of the solution to the Poisson equation in the periodic domain is also available with the small modification of the Newtonian potential. This completes the proof of Theorem \ref{thm:main-1}. 

\qed

	\section{Hydrodynamic limit estimate to incompressible Euler-Poisson system}\label{sec:5}
	\setcounter{equation}{0}
	In this section, we present the estimate for the modulated energy of \eqref{CSS-e-incomp} and complete the detailed proof of Theorem \ref{thm:main-2}. We recall that the spatial domain and $\gamma$ are set to be $\Omega = \bbt^2$ and $\gamma \ge 2$.
	
	\subsection{Modulated energy estimate}
	Different from the compressible ECS equations \eqref{CSE}, the limit system \eqref{CSE-incomp} cannot be written in terms of conservation laws \eqref{conservation}. Therefore, instead of using the relative entropy estimate in Proposition \ref{prop:rel-ent}, we directly estimate the time derivative of the modulated energy. We consider the following modulated energy for the system \eqref{CSS-e-incomp}: 
	\[\widetilde{\mathcal{H}}^\e(t):=\frac{1}{2}\int_{\bbt^2}|(\e^{1-\alpha}\widetilde{D}^\e-\i u)\psi^\e|^2\,\d x+\frac{\e^{-2\alpha}}{\gamma}\int_{\bbt^2}((\rho^\e)^{\gamma/2}-1)^2\,\d x,\quad \widetilde{D}^\e = \nabla +\frac{\i}{\e^{1+\alpha}}A^\e\]
	which can be also written as the hydrodynamic variable:
	\begin{align*}
		\widetilde{\mathcal{H}}^\e(t)&:=\widetilde{\mathcal{E}}^\e(t)-\int_{\bbt^2}\rho^\e u^\e\cdot u\,\d x+\frac{1}{2}\rho^\e|u|^2\,\d x\\
		&=\frac{1}{2}\int_{\bbt^2}\rho^\e|u^\e-u|^2\,\d x+\frac{\e^{-2\alpha}}{\gamma}\int_{\bbt^2}((\rho^\e)^{\gamma/2}-1)^2\,\d x+\frac{\e^{2-2\alpha}}{2}\int_{\bbt^2}|\nabla\sqrt{\rho^\e}|^2\,\d x.
	\end{align*}
	Here, $u$ is the solution to the incompressible Euler equation:
	\[\pa_t u+ (u\cdot \nabla)u +\nabla\pi =0,\quad \nabla\cdot u=0,\quad u|_{t=0} = u_{\textup{in}}.\]
	
	\begin{lemma}\label{L5.1}
		Let $(\psi^\e,A^\e_0,A^\e)$ be the unique global solution to the SCS equations \eqref{CSS-e} subject to the initial data $\psi^\e_{\textup{in}}\in H^2(\Omega)$ and  $A^\e_{\textup{in}}$ satisfying \eqref{compatibility}  and let $u$ be the unique local-in-time smooth solution to the incompressible Euler equations \eqref{CSE-incomp}$_{1}$ subject to the initial data $u_{\textup{in}}$. Suppose that the initial data satisfy the well-prepared condition \eqref{well-prepared-2}. Then,
		\[\widetilde{\mathcal{H}}^\e(t)\le C\e^{\min\left\{\frac{2\alpha}{\gamma},\lambda\right\}},\quad 0\le t< T_*. \]
	\end{lemma}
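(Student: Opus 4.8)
The plan is to differentiate $\widetilde{\mathcal H}^\e(t)$ in time and absorb all terms into $C\widetilde{\mathcal H}^\e + C\e^{\text{(good power)}}$, then close with Gr\"onwall's inequality. Since the target system \eqref{CSE-incomp} is not a conservation law, I cannot invoke Proposition \ref{prop:rel-ent}; instead I would work directly with the hydrodynamic form
\[
\widetilde{\mathcal H}^\e = \frac12\int_{\bbt^2}\rho^\e|u^\e-u|^2\,\d x+\frac{\e^{-2\alpha}}{\gamma}\int_{\bbt^2}\big((\rho^\e)^{\gamma/2}-1\big)^2\,\d x+\frac{\e^{2-2\alpha}}{2}\int_{\bbt^2}|\nabla\sqrt{\rho^\e}|^2\,\d x
\]
and use the hydrodynamic equations \eqref{CSS-hydro-e-2} for $(\rho^\e,\rho^\e u^\e)$ together with the incompressible Euler equation for $u$. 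First I would expand $\frac{\d}{\d t}\widetilde{\mathcal H}^\e$ into the kinetic part $\frac{\d}{\d t}\int\frac12\rho^\e|u^\e-u|^2$, the internal-energy/pressure part, and the quantum part $\frac{\e^{2-2\alpha}}{2}\frac{\d}{\d t}\int|\nabla\sqrt{\rho^\e}|^2$. For the kinetic part I use the continuity equation and momentum equation to rewrite $\partial_t(\rho^\e u^\e)$, substitute $\partial_t u = -(u\cdot\nabla)u-\nabla\pi$, and integrate by parts; the $\nabla\pi$ term should drop by incompressibility (or at worst pair against $\rho^\e-1$, which is controlled by the internal energy). The pressure term $\e^{-2\alpha}\nabla p(\rho^\e)$ combines with the time derivative of $\frac{\e^{-2\alpha}}{\gamma}\int((\rho^\e)^{\gamma/2}-1)^2$ — this is the crucial cancellation, and I would check it relies on the identity $\frac{\d}{\d t}\int \frac{\e^{-2\alpha}}{\gamma}((\rho^\e)^{\gamma/2}-1)^2 = \e^{-2\alpha}\int \nabla p(\rho^\e)\cdot u^\e\,\d x + (\text{lower order})$ using $\partial_t\rho^\e=-\nabla\cdot(\rho^\e u^\e)$.

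The Chern-Simons/Lorentz-type terms need separate care. The term $\rho^\e(u^\e)^\perp$ appearing in the momentum equation of \eqref{CSS-hydro-e-2} (via \eqref{B-7}--\eqref{B-8}) pairs against $u^\e-u$; since $(u^\e)^\perp\cdot u^\e=0$, this reduces to $-\int \rho^\e(u^\e)^\perp\cdot u\,\d x$, which I would bound using the elliptic bound $\|A^\e\|\lesssim\e^\alpha\|\rho^\e-1\|$ (from $\Delta A^\e=-\e^\alpha(\nabla\rho^\e)^\perp$ and $\nabla\times A^\e$) together with $\|u^\e-u\|_{L^2(\rho^\e)}$, paying the small factor $\e^\alpha$. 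The $\nabla A_0^\e$ contributions are handled similarly via $\Delta A_0^\e=\nabla\times(\rho^\e u^\e)$ and integration by parts. For the quantum part I would imitate the computation in Proposition \ref{P4.1}: integrate by parts twice, using $\frac{\d}{\d t}\int|\nabla\sqrt{\rho^\e}|^2 = -\int\nabla(\nabla\cdot u^\e):\dots$ type identities, to get a bound of the form $C\e^{2-2\alpha}\big(\int|\nabla\sqrt{\rho^\e}|^2+1\big)$, controlled using conservation of mass $\int\rho^\e=1$ and smoothness of $u$.

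Collecting everything, I expect to arrive at $\frac{\d}{\d t}\widetilde{\mathcal H}^\e\le C\widetilde{\mathcal H}^\e + C\e^{2-2\alpha} + (\text{terms with explicit }\e\text{-powers from the CS fields})$, and since $0<\alpha<1$ gives $2-2\alpha>0$, Gr\"onwall yields $\widetilde{\mathcal H}^\e(t)\le C(\widetilde{\mathcal H}^\e(0)+\e^{\min\{\dots\}})$, which is $C\e^{\min\{2\alpha/\gamma,\lambda\}}$ by the well-prepared condition \eqref{well-prepared-2}. The main obstacle will be the pressure/internal-energy cancellation: unlike the $\gamma$-law case in Section \ref{sec:4}, here $p(\rho)=\frac{\gamma-1}{\gamma}\rho^\gamma-\frac{\gamma-2}{\gamma}\rho^{\gamma/2}$ does not match the modulating potential $\frac1\gamma(\rho^{\gamma/2}-1)^2$ exactly, so I would need a convexity/Taylor argument near $\rho^\e=1$ (analogous to Lemma \ref{L4.1}) to show the leftover after cancellation is either sign-definite or bounded by $\widetilde{\mathcal H}^\e$ plus a harmless $\e$-power; tracking the precise exponent $2\alpha/\gamma$ will come from balancing the $\e^{-2\alpha}$ weight against the size of $((\rho^\e)^{\gamma/2}-1)$ in the error terms.
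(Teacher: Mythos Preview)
Your overall plan (direct differentiation of $\widetilde{\mathcal H}^\e$, then Gr\"onwall) is the paper's plan, but several of the steps you single out as difficulties are either nonexistent or handled by simpler mechanisms, and the one genuinely delicate point you do not isolate clearly.

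First, there are no Chern--Simons/Lorentz forcing terms to estimate. In the derivation in Section~\ref{sec:2.2.2} the term $\rho^\e(u^\e)^\perp$ in \eqref{B-7} cancels exactly against the contribution of $\e^{-\alpha}\partial_t A^\e$ from \eqref{B-8}; the resulting momentum equation in \eqref{CSS-hydro-e-2} has no gauge-field source. So your paragraph about bounding $(u^\e)^\perp$ and $\nabla A_0^\e$ via elliptic estimates is unnecessary. Second, your worry about a mismatch between $p(\rho)$ and the modulating potential is unfounded: $p=\rho\widetilde V'-\widetilde V$ always, so the cancellation is exact; in fact in the paper's organization the dangerous term $\e^{-2\alpha}\int\nabla p(\rho^\e)\cdot u\,\d x$ simply vanishes by $\nabla\cdot u=0$. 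The paper also shortcuts your three-part expansion by writing $\widetilde{\mathcal H}^\e=\widetilde{\mathcal E}^\e-\int\rho^\e u^\e\cdot u+\tfrac12\int\rho^\e|u|^2$ and using energy conservation (Proposition~\ref{P2.2}) to kill $\tfrac{\d}{\d t}\widetilde{\mathcal E}^\e$; only the cross terms need differentiating.

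The real gap in your sketch is the treatment of $\int\rho^\e(u^\e-u)\cdot\nabla\pi$. Saying it ``pairs against $\rho^\e-1$'' is the right intuition but not enough: a direct Cauchy--Schwarz bound produces $C(\widetilde{\mathcal H}^\e)^{1/2}$, which does not close. The paper's device is to use the continuity equation to write $\int\rho^\e u^\e\cdot\nabla\pi=\int\pi\,\partial_t\rho^\e=\tfrac{\d}{\d t}\int(\rho^\e-1)\pi-\int(\rho^\e-1)\partial_t\pi$, and then invoke the \emph{a priori} bound $\|\rho^\e-1\|_{L^\gamma}\le C\e^{2\alpha/\gamma}$ coming from energy conservation (since $\e^{-2\alpha}\int((\rho^\e)^{\gamma/2}-1)^2\le\widetilde{\mathcal E}^\e(0)$ and $|\rho^\e-1|^\gamma\le((\rho^\e)^{\gamma/2}-1)^2$). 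After integrating in $t$, the total-derivative piece becomes a harmless boundary term of size $C\e^{2\alpha/\gamma}$. This is precisely where the exponent $2\alpha/\gamma$ in the statement originates; without this step you will not recover it.
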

	\begin{proof}
		First of all, it follows from the conservation of the energy (Proposition \ref{P2.2}) that
		\[\frac{\e^{-2\alpha}}{\gamma}\int_{\bbt^2}((\rho^\e)^{\gamma/2}-1)^2\,\d x\le \widetilde{\mathcal{E}}(t)=\widetilde{\mathcal{E}}(0).\]
		Since $|\rho^\e-1|^{\gamma}\le |(\rho^\e)^{\frac{\gamma}{2}}-1|^2$, we obtain $\|\rho^\e-1\|_{L^\gamma}\le C\e^{\frac{2\alpha}{\gamma}}$. We now use \eqref{CSS-hydro-e-2} to estimate $\widetilde{\mathcal{H}}^\e$ as
	\begin{align*}
		\frac{\d}{\d t} \widetilde{\mathcal{H}}^\e(t)&=-\frac{\d}{\d t}\int_{\bbt^2}\rho^\e u^\e\cdot u \,\d x+\frac{1}{2}\frac{\d}{\d t}\int_{\bbt^2}\rho^\e|u|^2\,\d x\\
		&=\int_{\bbt^2}	\left(\nabla\cdot(\rho^\e u^\e\otimes u^\e)+\e^{-2\alpha}\nabla p(\rho^\e)-\frac{\e^{2-2\alpha}\rho^\e}{2}\nabla\left(\frac{\Delta\sqrt{\rho^\e}}{\sqrt{\rho^\e}}\right)\right)\cdot u\,\d x\\
		&\quad+\int_{\bbt^2}\rho^\e u^\e\cdot ((u\cdot \nabla)u+\nabla \pi)\,\d x\\
		&\quad +\frac{1}{2}\int_{\bbt^2}\rho^\e u^\e\cdot \nabla(|u|^2)\,\d x+\int_{\bbt^2}\rho^\e u\cdot(-(u\cdot \nabla) u-\nabla\pi)\,\d x\\
		&=:\mathcal{I}_{21}+\mathcal{I}_{22}+\mathcal{I}_{23}.
	\end{align*}
	
	\noindent $\bullet$ (Estimate of $\mathcal{I}_{21}$): We use the integration by part to obtain 
	\begin{align*}
		\mathcal{I}_{21} &= -\int_{\bbt^2}\rho^\e (u^\e\otimes u^\e):\nabla u\,\d x +\frac{\e^{-2\alpha}}{2}\int_{\bbt^2}\nabla p(\rho^\e)\cdot u\,\d x +\frac{\e^{2-2\alpha}}{2}\int_{\bbt^2}\frac{\Delta \sqrt{\rho^\e}}{\sqrt{\rho^\e}}\nabla\rho^\e\cdot u\,\d x\\
		&=-\int_{\bbt^2}\rho^\e (u^\e\otimes u^\e):\nabla u\,\d x+\e^{2-2\alpha}\int_{\bbt^2} u\cdot \nabla\sqrt{\rho^\e}\Delta\sqrt{\rho^\e}\,\d x,
	\end{align*}
	where we used the incompressibility condition $\nabla\cdot u=0$.\\
	
	\noindent $\bullet$ (Estimate of $\mathcal{I}_{22}$): It is straightforward that
	\[\mathcal{I}_{22}= \int_{\bbt^2}\rho^\e (u^\e\otimes u):\nabla u\,\d x+\int_{\bbt^2}\rho^\e u^\e\cdot \nabla \pi\,\d x.\]
	
	\noindent $\bullet$ (Estimate of $\mathcal{I}_{23}$): Similarly, $\mathcal{I}_{23}$ can also be written as
	\[\mathcal{I}_{23}= \int_{\bbt^2}\rho^\e (u\otimes u^\e):\nabla u\,\d x-\int_{\bbt^2}\rho^\e (u\otimes u):\nabla u\,\d x-\int_{\bbt^2}\rho^\e u\cdot \nabla \pi\,\d x.\]
	
	Therefore, we combine the estimates for $\mathcal{I}_{21}$, $\mathcal{I}_{22}$ and $\mathcal{I}_{23}$ to obtain
	
	\begin{align*}
		\frac{\d}{\d t}\widetilde{\mathcal{H}}^\e(t)&=-\int_{\bbt^2}\rho^\e((u^\e-u)\otimes (u^\e-u)):\nabla u\,\d x+\int_{\bbt^2}\rho^\e(u^\e-u)\cdot\nabla\pi\,\d x\\
		&\quad+\e^{2-2\alpha}\int_{\bbt^2}u\cdot\nabla \sqrt{\rho^\e}\Delta \sqrt{\rho^\e}\,\d x\\
		&\le C\int_{\bbt^2}\rho^\e|u^\e-u|^2\,\d x +C\e^{2-2\alpha}\int_{\bbt^2}|\nabla\sqrt{\rho^\e}|^2\,\d x+\int_{\bbt^2}\rho^\e(u^\e-u)\cdot\nabla\pi\,\d x.
	\end{align*}
	We now estimate the last term as
	\begin{align*}
		\int_{\bbt^2}\rho^\e(u^\e-u)\cdot\nabla\pi\,\d x&=\int_{\bbt^2}\rho^\e u^\e \cdot\nabla \pi \,\d x-\int_{\bbt^2}\rho^\e u\cdot\nabla\pi\,\d x\\
		&=\int_{\bbt^2}\pa_t(\rho^\e-1)\pi\,\d x-\int_{\bbt^2}(\rho^\e-1)u\cdot\nabla\pi\,\d x\\
		&=\frac{\d}{\d t}\int_{\bbt^2}(\rho^\e-1)\pi\,\d x-\int_{\bbt^2}(\rho^\e-1)\pa_t \pi\,\d x-\int_{\bbt^2}(\rho^\e-1)u\cdot\nabla\pi\,\d x\\
		&=\frac{\d}{\d t}\int_{\bbt^2}(\rho^\e-1)\pi\,\d x-\int_{\bbt^2}(\rho^\e-1)(\pa_t\pi + u\cdot\nabla\pi)\,\d x\\
		&\le \frac{\d}{\d t}\int_{\bbt^2}(\rho^\e-1)\pi\,\d x+ \|\rho^\e-1\|_{L^\gamma}\|\pa_t\pi+u\cdot\nabla\pi\|_{L^{\frac{\gamma}{\gamma-1}}}\\
		&\le \frac{\d}{\d t}\int_{\bbt^2}(\rho^\e-1)\pi\,\d x+ C\e^{\frac{2\alpha}{\gamma}}.
	\end{align*}
	Therefore, we conclude that
	\[\frac{\d\widetilde{\mathcal{H}}^\e}{\d t}\le \frac{\d}{\d t}\int_{\bbt^2}(\rho^\e-1)\pi\,\d x+ C\widetilde{\mathcal{H}}^\e(t) +C\e^{\frac{2\alpha}{\gamma}},\quad 0\le t< T_*\]
	and by integrating over $[0,t]$, we deduce
	\begin{align*}
	\widetilde{\mathcal{H}}^\e(t) &\le \widetilde{\mathcal{H}}^\e(0)+\int_{\bbt^2}(\rho^\e(t,x)-1)\pi(t,x)\,\d x-\int_{\bbt^2}(\rho^\e(0,x)-1)\pi(0,x)\,\d x+C\int_0^t \widetilde{\mathcal{H}}^\e(s)\,\d s +C\e^{\frac{2\alpha}{\gamma}}\\
	&\le \widetilde{\mathcal{H}}^\e(0)+\|\rho^\e(t)-1\|_{L^\gamma}\|\pi(t)\|_{L^\frac{\gamma}{\gamma-1}} +\|\rho^\e(0)-1\|_{L^\gamma}\|\pi(0)\|_{L^\frac{\gamma}{\gamma-1}}+C\int_0^t \widetilde{\mathcal{H}}^\e(s)\,\d s +C\e^{\frac{2\alpha}{\gamma}}\\
	&\le \widetilde{\mathcal{H}}^\e(0)+C\e^{\frac{2\alpha}{\gamma}}+C\int_0^t\widetilde{\mathcal{H}}^\e(s)\,\d s,
	\end{align*}
	where we used $\|\rho^\e-1\|_{L^\gamma} \le C\e^{\frac{2\alpha}{\gamma}}$. Then, Gr\"ownall inequality implies the desired estimate:
	\[\widetilde{\mathcal{H}}^\e(t)\le C\left(\widetilde{\mathcal{H}}^\e(0)+\e^{\frac{2\alpha}{\gamma}}\right)\le C\e^{\min\left\{\frac{2\alpha}{\gamma},\lambda\right\}},\quad 0\le t< T_*.\]
	\end{proof}
	\subsection{Proof of Theorem \ref{thm:main-2}.}
	We now provide the desired convergence estimate and complete the proof of Theorem \ref{thm:main-2}. We already observe in Lemma \ref{L5.1} that
	\[\|\rho^\e-1\|_{L^\gamma(\bbt^2)}\le C\e^{\frac{2\alpha}{\gamma}}\to0,\quad \mbox{as}\quad \e\to0.\]
	Moreover, since both $\rho^\e-1$ and $1$ are in $L^\gamma(\bbt^2)$, $\rho^\e\in L^\gamma(\bbt^2)$ and $\|\rho^\e\|_{L^\gamma}$ is uniformly bounded. Therefore, we estimate the difference $\|\rho^\e u^\e-u\|_{L^{\frac{2\gamma}{\gamma+1}}}$ as 
	\begin{align}
		\begin{aligned}\label{E-1}
		\|\rho^\e u^\e-u\|_{L^{\frac{2\gamma}{\gamma+1}}}&\le \|\rho^\e(u^\e-u)\|_{L^{\frac{2\gamma}{\gamma+1}}}+\|(\rho^\e-1)u\|_{L^\frac{2\gamma}{\gamma+1}}\\
		&\le \|\sqrt{\rho^\e}\|_{L^{2\gamma}}\|\sqrt{\rho^\e}|u^\e-u|\|_{L^2}+\|\rho^\e-1\|_{L^\gamma}\|u\|_{L^{\frac{2\gamma}{\gamma-1}}}\\
		&\le C\widetilde{\mathcal{H}}^\e(t) +C\e^{\frac{2\alpha}{\gamma}}\to 0,
		\end{aligned}
	\end{align}
	as well as the difference $\|\sqrt{\rho^\e}u^\e-u\|_{L^2}$ as
	\begin{align*}
		\|\sqrt{\rho^\e}u^\e-u\|_{L^2} &\le \|\sqrt{\rho^\e}|u^\e-u|\|_{L^2} +\|(\sqrt{\rho^\e}-1)|u|\|_{L^2}\\
		&\le \|\sqrt{\rho^\e}|u^\e-u|\|_{L^2}+\|u\|_{L^{\frac{2\gamma}{\gamma-1}}}\|\sqrt{\rho^{\e}}-1\|_{L^{2\gamma}}\\
		&\le C\widetilde{\mathcal{H}}^\e +C\|\rho^\e-1\|_{L^\gamma}^\frac{1}{2}\rightarrow 0.
	\end{align*}
	For the convergence of the Chern-Simons part, we first note that $(A_0^\e,A^\e)$ satisfies
	\[ \Delta A_0^\e = \nabla\times (\rho^\e u^\e),\quad \Delta A^{\varepsilon}=-\e^{\alpha}(\nabla \rho^\e)^{\perp}\]
	while $A_0$ satisfies
	\[\Delta A_0 =\nabla\times u.\]
	Then, the desired convergences from $(A^\e_0,A^\e)$ to $(A_0,0)$ can be obtained by using exactly the same arguments with $\rho\equiv1$ in Section \ref{sec:4.2} and the convergence of $\rho^\e u^\e \to u$ in \eqref{E-1}, and this complete the proof of Theorem \ref{thm:main-2}. 
	
	\qed
	
	\section{Conclusion}\label{sec:6}
	\setcounter{equation}{0}
	In this paper, we studied the hydrodynamic limit problem for the Schr\"odinger-Chern-Simons equations. We adopt the Madelung transformation and different scalings to derive the compressible and incompressible Euler equations coupled with the Chern-Simons equations and Poisson equation respectively. To obtain the rigorous convergence, we focus on the modulated energy functional. For the case of compressible Euler limit, we relate the modulated energy with the standard relative entropy in the kinetic theory and use the well-known estimates on the relative entropy functional to provide the simple estimate of the modulated energy functional. On the other hand, for the case of the incompressible Euler limit, we directly estimate the modified energy functional to obtain the desired convergence. Beyond the Schr\"odinger-Chern-Simons equations, other interesting systems, such as Schr\"odinger-Maxwell, Chern-Simons-Higgs, or Maxwell-Klein-Gordon equations, are also actively studied from the various points of view. Therefore, investigating the hydrodynamic limits of those systems would be left as interesting future works.

	\bibliographystyle{amsplain}
	
\end{document}